  \newtheorem{thm}{Theorem}[section]
 \newtheorem{cor}[thm]{Corollary}
 \newtheorem{prop}[thm]{Proposition}
 \newtheorem{defn}[thm]{Definition}
 \newtheorem{lemma}[thm]{Lemma}
 \newtheorem{rem}[thm]{Remark}
\renewcommand{\span}{span}
\renewcommand{\a}{\alpha}
\renewcommand{\b}{\beta}
\renewcommand{\d}{\delta}
\newcommand{\la}{\lambda}
\newcommand{\C}{\mathbb{C}}
\newcommand{\R}{\mathbb{R}}
\renewcommand{\S}{\mathbb{S}}
\newcommand{\N}{\mathbb{N}}
\newcommand{\T}{\mathbb{T}}
\newcommand{\D}{\mathbb{D}}
\newcommand{\Z}{\mathbb{Z}}
 \title{Orthogonal expansions related to compact Gelfand pairs}
 \author{Christian Berg, Ana P. Peron and Emilio Porcu}
 \date{\today}
\begin{document}

 \maketitle
 
 \begin{abstract} Given a compact Gelfand pair $(G,K)$ and a locally compact group $L$, we characterize the class $\mathcal P_K^\sharp(G,L)$ of continuous positive definite functions $f:G\times L\to \C$ which are bi-invariant in the $G$-variable with respect to $K$. The functions of this class are the functions having a uniformly convergent expansion $\sum_{\varphi\in Z} B(\varphi)(u)\varphi(x)$
for $x\in G,u\in L$, where the sum is over the space $Z$ of positive definite spherical functions $\varphi:G\to\C$ for the Gelfand pair, and $(B(\varphi))_{\varphi\in Z}$ is a family of continuous positive definite functions on $L$ such that $\sum_{\varphi\in Z}B(\varphi)(e_L)<\infty$. Here $e_L$ is the neutral element of the group $L$. For a compact abelian group $G$ considered as a Gelfand pair $(G,K)$ with trivial $K=\{e_G\}$, we obtain a characterization of $\mathcal P(G\times L)$ in terms of Fourier expansions on the dual group $\widehat{G}$.

The result is described in detail for the case of the Gelfand pairs $(O(d+1),O(d))$ and $(U(q),U(q-1))$ as well as for the product of these Gelfand pairs. 

The result generalizes recent theorems of Berg-Porcu (2016) and Guella-Menegatto (2016).  
 \end{abstract}

 2010 MSC: 43A35, 43A85, 43A90, 33C45,33C55

{\bf Keywords}: Gelfand pairs, Positive definite functions, Spherical functions, Spherical harmonics for real an complex spheres.

\section{Introduction} 

In \cite{B:P} Berg and Porcu found an extension of Schoenberg's Theorem from \cite{S} about positive definite functions on spheres in euclidean spaces.
Schoenberg's Theorem has played an important role in statistics because covariance kernels for isotropic random fields on spheres are modelled via the class $\mathcal P(\S^d)$ of continuous
functions $f:[-1,1]\to\R$, which are positive definite in the sense introduced by Schoenberg: For $n\in\N, \xi_1,\ldots,\xi_n\in \S^d$, the matrix $[f(\xi_j\cdot\xi_k)_{j,k=1}^n]$ is positive semidefinite. Here $\S^d$ denotes the $d$-dimensional unit sphere in euclidean space $\R^{d+1}$. 

The extension of Schoenberg's theorems  in \cite{B:P} was motivated by the need in statistics to consider data, which depend both on the position on the earth and on the time, i.e. random fields defined on the product $\S^d\times \R$. Assuming isotropy for the position and stationarity in time, one is lead to consider the class $\mathcal P(\S^d,\R)$ of continuous functions $f:[-1,1]\times \R\to \C$ such that the kernel $((\xi,u),(\eta,v))\mapsto f(\xi\cdot\eta,v-u)$ is positive definite on the space $\S^d\times\R$.

We recall that for an arbitrary non-empty set $X$, a kernel on $X$ is a function $k:X^2\to\C$, and it is called positive definite if for any $n\in\N$, any finite collection of points $x_1,\ldots,x_n\in X$ and numbers $c_1,\ldots,c_n \in \C$  one has
$$
\sum_{j,k=1}^n k(x_j,x_k)c_j\overline{c_k}\ge 0,
$$
i.e., the matrix $[k(x_j,x_k)_{j,k=1}^n]$ is hermitian and positive semidefinite. By $\mathcal P(X^2)$ we denote the class of positive definite kernels on $X$. The theory of positive definite kernels is treated in \cite{B:C:R}.

 It turns out that in the characterization of the class $\mathcal P(\S^d,\R)$, the additive group $\R$ can be replaced by an arbitrary locally compact group $L$. We recall that a continuous function $\varphi:L\to\C$ is called positive definite on $L$, in symbols $\varphi\in\mathcal P(L)$, if the kernel $(x,y)\mapsto f(x^{-1}y)$ is positive definite on $L$.  

The neutral element in the group $L$ is denoted $e_L$.

The following theorem holds:

\begin{thm}\label{thm:BP1}{\rm (Theorem 3.3 in \cite{B:P})} Let $d\in\N$  and let $f:[-1,1]\times L\to \mathbb C$ be a continuous function. Then $f$ belongs to $\mathcal P(\S^d,L)$ in the sense that the kernel 
$$
(\xi,u),(\eta,v))\mapsto f(\xi\cdot\eta,u^{-1}v)
$$
is positive definite on $\S^d\times L$,
 if and only if there exists
 a sequence of functions $(\varphi_{n,d})_{n\ge 0}$ from $\mathcal P(L) $ with $\sum \varphi_{n,d}(e_L)<\infty$ 
such that
\begin{equation}\label{eq:expand}
f(x,u)=\sum_{n=0}^\infty \varphi_{n,d}(u) c_n(d,x),\quad x\in[-1,1],\;u\in L.
\end{equation}
The above expansion is uniformly convergent for $(x,u)\in [-1,1]\times L$, and we have
\begin{equation}\label{eq:coef}
\varphi_{n,d}(u)=\frac{N_n(d)\sigma_{d-1}}{\sigma_d}\int_{-1}^1 f(x,u)c_n(d,x)(1-x^2)^{d/2-1}\,{\rm d}x.
\end{equation}
\end{thm}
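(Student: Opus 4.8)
The plan is to prove the two implications using three classical ingredients from the harmonic analysis of the sphere: the addition theorem, which for an $L^2(\omega_d)$-orthonormal basis $\{Y_{n,j}\}_{j=1}^{N_n(d)}$ of the degree-$n$ spherical harmonics reads
$$
c_n(d,\xi\cdot\eta)=\frac{\sigma_d}{N_n(d)}\sum_{j=1}^{N_n(d)}Y_{n,j}(\xi)\overline{Y_{n,j}(\eta)},
$$
the Funk--Hecke theorem, which evaluates the zonal integral operator on a spherical harmonic, and the classical Schoenberg theorem of \cite{S}. I would also use repeatedly that products (Schur) and pointwise, hence uniform, limits of positive definite kernels are again positive definite. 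Here $\omega_d$ denotes the surface measure on $\S^d$ and $t=\xi\cdot\eta$, with the induced one-dimensional measure proportional to $(1-t^2)^{d/2-1}\,\mathrm{d}t$, which is exactly the weight in \eqref{eq:coef}.

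For the sufficiency direction I would argue directly. Given the family $(\varphi_{n,d})$, the addition theorem exhibits $(\xi,\eta)\mapsto c_n(d,\xi\cdot\eta)$ as a positive definite kernel on $\S^d$, while $\varphi_{n,d}\in\mathcal P(L)$ makes $(u,v)\mapsto\varphi_{n,d}(u^{-1}v)$ positive definite on $L$; by the Schur product theorem each summand of \eqref{eq:expand} is positive definite on $\S^d\times L$. Using the bounds $|c_n(d,x)|\le c_n(d,1)=1$ on $[-1,1]$ and $|\varphi_{n,d}(u)|\le\varphi_{n,d}(e_L)$, the Weierstrass $M$-test together with $\sum_n\varphi_{n,d}(e_L)<\infty$ yields uniform convergence, so $f$ is continuous and, as a uniform limit of positive definite kernels, lies in $\mathcal P(\S^d,L)$.

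The necessity direction carries the real work. Given $f\in\mathcal P(\S^d,L)$, I would define $\varphi_{n,d}$ by \eqref{eq:coef}, continuity in $u$ being immediate from the joint continuity of $f$ and dominated convergence. The crux is to show $\varphi_{n,d}\in\mathcal P(L)$. Fixing $u_1,\dots,u_m\in L$ and $c_1,\dots,c_m\in\C$, I would test the positive definite kernel $K((\xi,u),(\eta,v))=f(\xi\cdot\eta,u^{-1}v)$ against the measure $\mathrm{d}\mu=\sum_k c_k\,Y_{n,j}(\xi)\,\mathrm{d}\omega_d(\xi)\,\delta_{u_k}$, and the Funk--Hecke theorem collapses the double sphere integral to
$$
\iint_{\S^d\times\S^d} f(\xi\cdot\eta,w)\,Y_{n,j}(\xi)\overline{Y_{n,j}(\eta)}\,\mathrm{d}\omega_d(\xi)\,\mathrm{d}\omega_d(\eta)=\frac{\sigma_d}{N_n(d)}\,\varphi_{n,d}(w).
$$
This gives $0\le\frac{\sigma_d}{N_n(d)}\sum_{k,l}c_k\overline{c_l}\,\varphi_{n,d}(u_k^{-1}u_l)$, which is positive definiteness of $\varphi_{n,d}$ since $\sigma_d/N_n(d)>0$. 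I expect the main obstacle to be justifying that positive definiteness of $K$ passes from finite point configurations to the integrated inequality against such measures; this I would settle by approximating the surface integral by Riemann sums and passing to the limit, which is legitimate because $\S^d$ is compact, $f$ is continuous, and only finitely many points of $L$ occur.

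It remains to establish summability and the expansion. Restricting $K$ to a fixed $u_0\in L$ shows $x\mapsto f(x,e_L)$ is positive definite on $\S^d$ in Schoenberg's sense, so by \cite{S} its Gegenbauer coefficients are nonnegative and sum to $f(1,e_L)$; these coefficients coincide with $\varphi_{n,d}(e_L)$ via \eqref{eq:coef}, giving $\sum_n\varphi_{n,d}(e_L)=f(1,e_L)<\infty$. Finally, $g(x,u):=\sum_n\varphi_{n,d}(u)c_n(d,x)$ converges uniformly by $|\varphi_{n,d}(u)|\le\varphi_{n,d}(e_L)$, so $g$ is continuous, and computing its Gegenbauer coefficients term by term, justified by uniform convergence and orthogonality, shows they equal those of $f(\cdot,u)$ for each $u$. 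Since the normalized Gegenbauer polynomials are complete in $L^2([-1,1],(1-x^2)^{d/2-1}\,\mathrm{d}x)$ and both $f(\cdot,u)$ and $g(\cdot,u)$ are continuous, they agree, so $f=g$, establishing \eqref{eq:expand} with the asserted uniform convergence.
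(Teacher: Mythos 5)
Your proposal is sound, and every step of it can be made rigorous, but it is not the route this paper takes. The paper does not prove the statement by sphere-specific harmonic analysis at all: it obtains it (restated as Theorem~\ref{thm:realsphere}) as the special case $(G,K)=(O(d+1),O(d))$ of the general Gelfand-pair result, Theorem~\ref{thm:main}. In the proof of that theorem, positive definiteness of the coefficient functions is established by a different mechanism than yours: instead of testing the kernel against signed measures with spherical-harmonic densities and invoking Funk--Hecke, the paper multiplies $f(x,u)$ by $\overline{\varphi(x)}$ (which stays positive definite by the Schur product, Proposition~\ref{thm:elem}), applies the integrated positivity criterion of Lemma~\ref{thm:equi} to the measure $\omega_G\otimes\sigma$, and collapses the double $G$-integral by translation invariance of Haar measure; summability at $u=e_L$ then comes from the Bochner--Godement theorem rather than from Schoenberg's theorem, and the identification step via $L^2$-expansion plus continuity is the same as yours. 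The two mechanisms are in fact dual to one another: summing your double integral $\iint f(\xi\cdot\eta,w)Y_{n,j}(\xi)\overline{Y_{n,j}(\eta)}\,d\omega_d(\xi)\,d\omega_d(\eta)$ over an orthonormal basis $j=1,\dots,N_n(d)$ and using the addition theorem reproduces exactly the paper's integral of $f\cdot\overline{\varphi}$, and your Riemann-sum justification is precisely the content of Lemmas~\ref{thm:top} and~\ref{thm:equi}. What your approach buys is a self-contained, elementary proof at the level of the sphere (essentially the strategy of the original Berg--Porcu paper that the statement is quoted from); what the paper's approach buys is generality, since multiplying by $\overline{\varphi}$ and using Haar invariance makes sense for any compact Gelfand pair -- complex spheres, products, compact abelian groups -- where no classical addition theorem or Funk--Hecke formula is available.
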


Here we have used the notation
\begin{equation}\label{eq:Geg}
c_n(d,x)=C_n^{(\la)}(x)/C_n^{(\la)}(1),\quad \la=(d-1)/2
\end{equation}
for the ultraspherical polynomials $c_n(d,x)$ as normalized Gegenbauer polynomials $C_n^{(\la)}(x)$ for the parameter $\la=(d-1)/2$, cf. \cite{A:A:R}. The constants $\sigma_d$ and $N_n(d)$ are defined in Section 5.

Schoenberg's Theorem for $\mathcal P(\S^d)$ is the special case of the previous theorem, where the group $L=\{e_L\}$ is trivial. The functions in $\mathcal P(L)$ are then just non-negative constants.

In \cite{M:P} Menegatto and Peron proved a theorem for the complex unit sphere $\Omega_{2q}$ in $\C^q$, see Section 6, analogous to Schoenberg's Theorem for $\mathcal P(\S^d)$. In an attempt to extend their result by taking product with an arbitrary locally compact group $L$ similar to the extension of Schoenberg's Theorem in \cite{B:P}, we realized that the real and complex spheres are homogeneous spaces  associated with certain groups and subgroups of respectively orthogonal and unitary matrices, and these matrix groups form compact Gelfand pairs. Therefore, Schoenberg's Theorem for real spheres $\S^d$ and Menegatto-Peron's result for complex spheres can be viewed as special cases of the Bochner-Godement Theorem for Gelfand pairs, see \cite{God}. It turns out to be possible to extend the Bochner-Godement theorem for compact Gelfand pairs by taking product with an arbitrary locally compact group $L$, and it is the main purpose of this paper to give a proof of that.

We refer to Section 2 for the necessary background about
Gelfand pairs, but we shall briefly introduce the setting of our main result, Theorem~\ref{thm:M} below.  Let $G$ and $K$ be compact groups with $K$  a subgroup of $G$ such that $(G,K)$ is a compact Gelfand pair. The dual space $Z$ of positive definite spherical functions $\varphi:G\to\C$ is an orthogonal basis for the Hilbert space $L^2_K(G)^\sharp$ of square integrable functions on $G$,  which are bi-invariant  with respect to $K$. 

To a compact Gelfand pair $(G,K)$ and an arbitrary locally compact group $L$ we shall 
characterize the set   $\mathcal P_K^\sharp(G,L)$ of continuous positive definite functions $f:G\times L\to \C$, which are bi-invariant in the $G$-variable with respect to $K$. The normalized Haar measure on $G$ is denoted $\omega_G$.    

\begin{thm}\label{thm:M} Let $(G,K)$ denote a compact Gelfand pair, let $L$ be a locally compact group and let  $f:G\times L\to \mathbb C$ be a continuous function. Then
$f$ belongs to $\mathcal P_K^\sharp(G,L)$ if and only if there exists
 a function $B:Z\to \mathcal P(L)$ satisfying  $\sum_{\varphi\in Z} B(\varphi)(e_L)<\infty$ 
such that
\begin{equation}\label{eq:expand}
f(x,u)=\sum_{\varphi\in Z} B(\varphi)(u)\varphi(x),\quad x\in G, \;u\in L.
\end{equation}
The above expansion is uniformly convergent for $(x,u)\in G\times L$, and we have
\begin{equation}\label{eq:coef}
B(\varphi)(u)=\d(\varphi)\int_G f(x,u)\overline{\varphi(x)}\,{\rm d}\omega_G(x)
 \quad u\in L.
\end{equation}
\end{thm}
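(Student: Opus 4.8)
The plan is to establish the two implications separately, with the harder work concentrated in the forward direction where one must extract the expansion and verify positive definiteness of the coefficient functions $B(\varphi)$.

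\medskip

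\noindent\textit{The converse (sufficiency).} Suppose $f$ has the stated expansion with $B(\varphi)\in\mathcal P(L)$ and $\sum_{\varphi\in Z}B(\varphi)(e_L)<\infty$. Since each $\varphi$ is a bounded (indeed $|\varphi(x)|\le\varphi(e_G)=1$) continuous spherical function and each $B(\varphi)$ satisfies $|B(\varphi)(u)|\le B(\varphi)(e_L)$ for positive definite functions, the general term is dominated by $B(\varphi)(e_L)$, so the Weierstrass $M$-test gives uniform convergence and continuity of $f$. For positive definiteness, fix points $(x_i,u_i)$ and scalars $c_i$; the key observation is that for a \emph{single} term, the kernel $((x,u),(y,v))\mapsto B(\varphi)(u^{-1}v)\varphi(x^{-1}\ast y)$ (where $\ast$ denotes the relevant convolution/double-coset structure) factors as a product of a positive definite kernel in the $L$-variable and the positive definite kernel arising from the spherical function $\varphi$. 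Here I would invoke the classical fact, underlying the Bochner--Godement theorem, that positive definite spherical functions give rise to positive definite kernels on $G$ via the bi-invariant structure. Since the Schur (Hadamard) product of positive definite kernels is positive definite, each summand is a positive definite kernel; a convergent nonnegative combination (the $B(\varphi)(e_L)\ge0$ are the relevant weights) of positive definite kernels is positive definite, and uniform convergence lets one pass to the limit.

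\medskip

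\noindent\textit{The direct implication (necessity).} Assume $f\in\mathcal P_K^\sharp(G,L)$. First I would \emph{define} $B(\varphi)(u)$ by the integral formula \eqref{eq:coef}, using that $Z$ is an orthogonal basis of $L^2_K(G)^\sharp$ with $\d(\varphi)$ the appropriate normalizing dimension; for each fixed $u$ this is simply the Fourier--spherical coefficient of the bi-invariant function $x\mapsto f(x,u)$. The two things to prove are that each $B(\varphi)\in\mathcal P(L)$ and that $\sum_{\varphi}B(\varphi)(e_L)<\infty$ with uniform convergence of the resulting series back to $f$. For $B(\varphi)\in\mathcal P(L)$: fix $u_1,\dots,u_n\in L$ and $c_1,\dots,c_n\in\C$, and consider $\sum_{j,k}B(\varphi)(u_j^{-1}u_k)c_j\overline{c_k}$. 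Writing out \eqref{eq:coef} and using the translation-invariant formulation of positive definiteness on $L$, this quantity should be expressible as a double integral over $G\times G$ against $f$ of a kernel built from $\varphi$; the positive definiteness of $f$ on $G\times L$ then forces nonnegativity once one chooses test points in $G$ distributed according to $\varphi$. Concretely, I expect the cleanest route is to integrate the positive-definiteness inequality for $f$ over $G^n$ against the rank-one-type kernel $\overline{\varphi(x_j)}\varphi(x_k)$ (or its spherical analogue), producing exactly $\d(\varphi)^{-2}\sum_{j,k}B(\varphi)(u_j^{-1}u_k)c_j\overline{c_k}\ge0$.

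\medskip

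\noindent\textit{Summability and reconstruction.} Having the $B(\varphi)\in\mathcal P(L)$, evaluate \eqref{eq:coef} at $u=e_L$: the number $B(\varphi)(e_L)=\d(\varphi)\int_G f(x,e_L)\overline{\varphi(x)}\,{\rm d}\omega_G(x)$ is precisely the spherical Fourier coefficient of the bi-invariant positive definite function $x\mapsto f(x,e_L)$ on $G$. By the Bochner--Godement theorem (the $L=\{e_L\}$ case), these coefficients are nonnegative and sum to $f(e_G,e_L)<\infty$, giving $\sum_\varphi B(\varphi)(e_L)<\infty$. With this summability in hand, the $M$-test argument from the converse direction shows the series $\sum_\varphi B(\varphi)(u)\varphi(x)$ converges uniformly to a continuous bi-invariant positive definite function; that this limit equals $f$ follows because, for each fixed $u$, both sides have the same spherical Fourier coefficients in the complete orthogonal system $Z$.

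\medskip

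\noindent\textbf{Main obstacle.} The crux is the necessity of $B(\varphi)\in\mathcal P(L)$: one must convert the joint positive definiteness of $f$ on the product $G\times L$ into positive definiteness of each spherical coefficient on $L$ alone. The delicate point is choosing the right ``integrated'' version of the defining inequality for $f$ so that the $G$-integration isolates a single spherical function $\varphi$ (exploiting orthogonality and the functional equation for spherical functions) while preserving the quadratic form in the $L$-variables. I expect this to require the reproducing/convolution identity $\int_G\varphi(xy^{-1})\overline{\varphi(y)}\,{\rm d}\omega_G(y)=\d(\varphi)^{-1}\varphi(x)$ characteristic of spherical functions, which is what makes the cross-terms collapse correctly; handling the continuity and measurability needed to justify interchanging integrals and sums is routine by comparison.
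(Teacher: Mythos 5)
Your overall architecture coincides with the paper's own proof: define $B(\varphi)$ by the integral formula \eqref{eq:coef}, show $B(\varphi)\in\mathcal P(L)$ by testing the positive definiteness of $f$ against weights built from $\varphi$ and $\omega_G$, obtain $\sum_\varphi B(\varphi)(e_L)<\infty$ from the Bochner--Godement theorem applied to $f(\cdot,e_L)$, and recover $f$ by the Weierstrass $M$-test combined with identification of coefficients in $L^2_K(G)^\sharp$ (the sufficiency direction, via closure of $\mathcal P_K^\sharp(G,L)$ under products, sums and limits, is also the paper's). However, there is a genuine gap at precisely the step you single out as the crux. Your concrete recipe --- ``integrate the positive-definiteness inequality for $f$ over $G^n$ against the kernel $\overline{\varphi(x_j)}\varphi(x_k)$'' --- does not yield $\sum_{j,k}B(\varphi)(u_j^{-1}u_k)c_j\overline{c_k}\ge 0$. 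Integrating the discrete inequality
\begin{equation*}
\sum_{j,k=1}^n f(x_j^{-1}x_k,u_j^{-1}u_k)\,c_j\varphi(x_j)\,\overline{c_k\varphi(x_k)}\;\ge\;0
\end{equation*}
over $(x_1,\dots,x_n)\in G^n$ does turn each off-diagonal term $(j\ne k)$ into the desired double integral, equal to $\d(\varphi)^{-2}B(\varphi)(u_j^{-1}u_k)\,c_j\overline{c_k}$ by the convolution identity you quote; but each diagonal term $j=k$ involves only \emph{one} integration variable and collapses to $f(e_G,e_L)\,|c_j|^2\int_G|\varphi|^2\,d\omega_G=f(e_G,e_L)\,|c_j|^2/\d(\varphi)$. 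Since $\d(\varphi)^{-2}B(\varphi)(e_L)\le \d(\varphi)^{-1}f(e_G,e_L)$, what you obtain is the desired quadratic form with its diagonal replaced by something larger, i.e.\ a strictly weaker inequality; positive definiteness of $B(\varphi)$ does not follow from it.

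What is missing is exactly the content of the paper's Lemma~\ref{thm:equi} (prepared by Lemma~\ref{thm:top}): a continuous positive definite function on a locally compact group $H$ satisfies $\int_H\int_H f(x^{-1}y)\,d\mu(x)\,d\overline{\mu(y)}\ge 0$ for \emph{every} compactly supported complex Radon measure $\mu$, proved by partitioning the support and approximating the integral within $\eps\|\mu\|^2$ by genuinely discrete positive sums. Granting this, the paper takes $\mu=\omega_G\otimes\sigma$ and applies the inequality not to $f$ but to $(x,u)\mapsto f(x,u)\overline{\varphi(x)}$, which still lies in $\mathcal P_K^\sharp(G,L)$ by closure under products; then plain invariance of $\omega_G$ (no functional equation needed) performs the $G$-integration and shows directly that $B(\varphi)$ satisfies the measure form of positive definiteness on $L$. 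Your sketch can be repaired either by proving that lemma, or by a replication trick: repeat each point $u_j$ with $N$ independent $G$-variables $x_j^{(a)}$ and weights $c_j\varphi(x_j^{(a)})/N$, so that the offending diagonal contributes only $O(1/N)$ and vanishes as $N\to\infty$. (A small separate slip: with your weights $\overline{\varphi(x_j)}$ the computation produces $B(\overline{\varphi})$ rather than $B(\varphi)$; one should weight by $c_j\varphi(x_j)$, i.e.\ use the kernel $\varphi(x_j)\overline{\varphi(x_k)}$, or simply note that $Z$ is stable under conjugation.)
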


This theorem is stated again as Theorem~\ref{thm:main} in Section 3. The dimension
$\delta(\varphi)$ is defined in \eqref{eq:hfi}.

In Section 2 we outline the general theory of Gelfand pairs and specializes the theory to compact Gelfand pairs.

In Section 3 we formulate our results about the class  $\mathcal P_K^\sharp(G,L)$ and prove the results in Section 4. A compact abelian group $G$ can be considered as a compact Gelfand pair with $K=\{e_G\}$. Applying Theorem~\ref{thm:main} to this Gelfand pair leads to an expansion result for functions in $\mathcal P(G \times L)$, see Theorem~\ref{thm:main2} and Corollary~\ref{thm:Fou}.   

In Section 5 and 6 we treat the special cases of the real and complex spheres.

In Section 7 we use that
the product of compact Gelfand pairs is again a compact Gelfand pair. This makes it possible to obtain a recent result about positive definite functions on products of real spheres: Theorem 2.9  in 
\cite{G:M:P}. We deduce an analogous theorem for products of complex spheres.

In \cite{G:M} Guella and Menegatto have extended the result of \cite{B:P} about expansions
$$
\sum_{n=0}^\infty \varphi_{n,d}(u)c_n(d,x),\quad x\in [-1,1], u\in L,
$$
where the coefficient functions $\varphi_{n,d}$ belong to $\mathcal P(L)$, to expansions where
the coefficient functions are positive definite kernels $k_n(u,v)$ on an arbitrary set $X$.

We show in Section 8 that such a generalization is also possible in the framework of compact Gelfand pairs.

\section{Harmonic analysis on Gelfand pairs}

We start with some expository material related to positive definite functions and Gelfand pairs.
The main references are \cite{D},\cite{sasvari},\cite{vD},\cite{V:K},\cite{W}.

Let $G$ denote a locally compact group with neutral element $e_G$. The set $\mathcal P(G)$ of continuous positive definite functions on $G$, introduced in Section 1, is important in representation theory, harmonic analysis and especially in probability theory when $G=\R^n$. These functions are treated in many books,  see e.g. \cite[p. 255]{D} and \cite[p.14]{sasvari}.  It is known that any $\varphi \in \mathcal P(G)$ satisfies $\varphi(x^{-1})=\overline{\varphi(x)}$ and $|\varphi(x)|\le \varphi(e_G)$ for $x\in G$.

For a compact subgroup $K$ of $G$ we call a function $\varphi:G\to\C$ bi-invariant with respect to $K$ if
\begin{equation}\label{eq:bi}
\varphi(kxl)=\varphi(x),\quad x\in G, k,l\in K.
\end{equation}    

For a set $A$ of functions on $G$ we denote by $A_K^\sharp$ the set of  functions from $A$ which are bi-invariant with respect to $K$. In particular $C_K^\sharp(G)_c$ denotes the set of continuous complex-valued functions  on $G$
with compact support and bi-invariant with respect to $K$. It is easy to see that for $f,g\in C_K^\sharp(G)_c$ the convolution
\begin{equation}\label{eq:conv}
f*g(x)=\int_G f(y)g(y^{-1}x)\,d\omega_G(y),\quad x\in G,
\end{equation}
where $\omega_G$ denotes a left Haar measure on $G$, is again a bi-invariant function on $G$, and 
$C_K^\sharp(G)_c$ becomes a subalgebra of the group algebra $L^1(G)$.
  
We say that $(G,K)$ is a {\it Gelfand pair} if $C_K^\sharp(G)_c$ is  commutative, cf. \cite[p. 75]{vD} or
\cite[Part 3]{W}. The latter contains many equivalent conditions for $(G,K)$ to be a Gelfand pair.

For a Gelfand pair the group $G$ is necessarily unimodular, cf. \cite{B},\cite[p. 75]{vD}. 
A {\it spherical function} for $(G,K)$ is a continuous function $\varphi:G\to\C$ satisfying
\begin{equation}\label{eq:sph}
\int_K \varphi(xky)\,d\omega_K(k)=\varphi(x)\varphi(y),\quad x,y\in G;\quad \varphi(e_G)=1,
\end{equation}
where $\omega_K$ is Haar measure on $K$ normalized to $\omega_K(K)=1$.
A spherical function is necessarily bi-invariant with respect to $K$. In fact, from \eqref{eq:sph} with $y=e_G$ we get
$$
\int_K \varphi(xk)\,d\omega_K(k)=\varphi(x),\quad x\in G,
$$
 and the right invariance under $K$ follows. Putting $x=e_G$ in \eqref{eq:sph}, we similarly get
the left invariance under $K$.

The compact subgroup $K$ determines an equivalence relation $\sim$ in $G$ defined by $x\sim y$ if and only if $x=kyl$ for some $k,l\in K$. The equivalence classes are the compact sets $KxK,x\in G$, which are called double cosets. The set of double cosets is denoted $K\backslash  
G/K$ and it is a locally compact space in the quotient topology, cf. \cite{Mu}. Functions on $K\backslash G/K$  can be identified with functions on $G$ which are bi-invariant with respect to $K$.

The {\it dual space} of a Gelfand pair $(G,K)$ is the set $Z$ of positive definite spherical functions.
It is a locally compact space in the topology inherited from $C(G)$, which  carries the topology of uniform convergence on compact subsets of $G$, cf. \cite[p.83]{vD}. Let $M_b(Z)$ denote the set of positive finite Radon measures on $Z$.

The Fourier transform of a function $f\in L^1_K(G)^\sharp$ is the function $\widehat{f}:Z\to \C$
defined by
\begin{equation}\label{eq:Fou}
\widehat{f}(\varphi)=\int_G f(x)\overline{\varphi(x)}\,d\omega_G(x),\quad \varphi\in Z.
\end{equation}

 It is a continuous function on $Z$ vanishing at infinity, in symbols $\widehat{f}\in C_0(Z)$.

The set $\mathcal P_K^\sharp(G)$ of continuous positive definite bi-invariant functions on $G$ is characterized by a Bochner type theorem due to Godement \cite{God}.  

\begin{thm} (Bochner-Godement)\label{thm:BG} For any $\mu\in M_b(Z)$ the function
\begin{equation}\label{eq:BG}
f(x)=\int_Z \varphi(x)\,d\mu(\varphi),\quad x\in G
\end{equation}
belongs to $\mathcal P_K^\sharp(G)$, and any such function has the form \eqref{eq:BG} for a uniquely determined $\mu\in M_b(Z)$.
\end{thm}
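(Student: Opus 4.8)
The plan is to prove the two implications separately, treating the existence and uniqueness of the representing measure as the substantive part. For the easy direction, given $\mu\in M_b(Z)$ I would define $f$ by \eqref{eq:BG} and verify the three required properties directly. Continuity of $f$ follows by dominated convergence from the uniform bound $|\varphi(x)|\le\varphi(e_G)=1$ valid for every $\varphi\in Z$ (each being positive definite) together with the finiteness of $\mu$ and the continuity of each $\varphi$. Bi-invariance of $f$ is inherited from that of the spherical functions. For positive definiteness I would interchange the finite sum and the integral: for $x_1,\dots,x_n\in G$ and $c_1,\dots,c_n\in\C$,
\begin{equation*}
\sum_{j,k=1}^n f(x_j^{-1}x_k)c_j\overline{c_k}=\int_Z\Big(\sum_{j,k=1}^n\varphi(x_j^{-1}x_k)c_j\overline{c_k}\Big)\,d\mu(\varphi)\ge 0 ,
\end{equation*}
since each inner sum is nonnegative (every $\varphi\in Z$ being positive definite) and $\mu\ge 0$. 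Hence $f\in\mathcal P_K^\sharp(G)$.

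For the converse — that every $f\in\mathcal P_K^\sharp(G)$ arises this way — I would pass to the commutative convolution algebra $\mathcal A=L^1_K(G)^\sharp$, whose commutativity is exactly the Gelfand-pair hypothesis. The key structural input is the identification of the Gelfand spectrum of $\mathcal A$ with the dual space $Z$: each $\varphi\in Z$ gives a nonzero multiplicative functional $g\mapsto\widehat g(\varphi)$ via the Fourier transform \eqref{eq:Fou}, and conversely every such functional arises from a spherical function. A continuous positive definite bi-invariant $f$ induces a positive linear functional $\Phi_f(g)=\int_G f(x)\overline{g(x)}\,d\omega_G(x)$ on $\mathcal A$, the positivity condition $\Phi_f(g^**g)\ge 0$ being a reformulation of the positive definiteness of $f$. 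I would then apply the Bochner-type representation theorem for positive functionals on commutative Banach $*$-algebras (Godement's theorem, \cite{God}) to obtain a measure $\mu\in M_b(Z)$ representing $\Phi_f$, and reinterpreting the representation pointwise recovers $f(x)=\int_Z\varphi(x)\,d\mu(\varphi)$.

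Uniqueness I would derive from injectivity of the Fourier transform on measures. The representation yields, for every $g\in\mathcal A$, the identity $\int_Z\overline{\widehat g(\varphi)}\,d\mu(\varphi)=\int_G\overline{g(x)}f(x)\,d\omega_G(x)$, whose right-hand side depends only on $f$; thus any representing measure must reproduce these values. Since $\{\widehat g:g\in\mathcal A\}$ is a self-adjoint subalgebra of $C_0(Z)$ which separates points and has no common zero, it is dense by Stone--Weierstrass, and a finite Radon measure is determined by its integrals against a dense subset of $C_0(Z)$; hence $\mu$ is unique.

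The main obstacle is the structural input of the second paragraph: proving that the Gelfand spectrum of $\mathcal A$ is exactly $Z$ and that positive functionals on $\mathcal A$ are represented by positive measures on it. This is the heart of Godement's theorem and the only genuinely nonelementary ingredient. In the compact case relevant to this paper the whole argument simplifies: by the Peter--Weyl theorem $Z$ is discrete and the spherical functions form an orthogonal basis of $L^2_K(G)^\sharp$, so $f$ has the expansion $f=\sum_{\varphi\in Z}\d(\varphi)\widehat f(\varphi)\varphi$; positive definiteness forces every coefficient $\d(\varphi)\widehat f(\varphi)\ge 0$, and $\mu=\sum_{\varphi\in Z}\d(\varphi)\widehat f(\varphi)\,\delta_\varphi$ is then the unique (atomic) representing measure, of total mass $f(e_G)<\infty$. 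This atomic picture is precisely what Theorem~\ref{thm:M} refines.
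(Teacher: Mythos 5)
A preliminary remark on the comparison you ask for: the paper contains no proof of Theorem~\ref{thm:BG} at all. It is quoted as known background, attributed to Godement with the citation \cite{God}, and the paper's own contribution (Theorem~\ref{thm:main}) takes it as input. So your proposal can only be measured against the classical argument. Within it, your forward direction and your uniqueness argument are essentially sound: interchanging the finite sum with the integral gives positive definiteness, bi-invariance is inherited from the spherical functions, and the Stone--Weierstrass/Riesz argument on $C_0(Z)$ does pin down $\mu$. (One small repair: dominated convergence is a sequential tool, so for non-metrizable $G$ you should instead get continuity of $f$ from inner regularity of $\mu$ together with the joint continuity of $(x,\varphi)\mapsto\varphi(x)$, i.e.\ equicontinuity of compact subsets of $Z$ via Ascoli.)

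The converse, however, has a genuine gap, sitting exactly where you locate ``the heart'' of the argument, and the problem is not merely that you defer that step to \cite{God}: the structural claim you propose to import is false as you state it. The Gelfand spectrum of $\mathcal A=L^1_K(G)^\sharp$ is not $Z$; it is the set of all \emph{bounded} spherical functions, and the part of it that any $*$-algebraic argument can see --- the hermitian characters --- is in general strictly larger than $Z$. Moreover, after your reduction the only property of $f$ that survives is positivity of $\Phi_f$ on $\mathcal A$, and this property cannot single out $Z$: for \emph{any} bounded hermitian spherical function $\varphi_0$ one has $\Phi_{\varphi_0}(g^* * g)=|\widehat g(\varphi_0)|^2\ge 0$ for all $g\in\mathcal A$, so $\varphi_0$ induces a positive functional on $\mathcal A$ whether or not $\varphi_0$ is positive definite on $G$. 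Thus positivity of $\Phi_f$ on $\mathcal A$ is strictly weaker than positive definiteness of $f$ on $G$ (your parenthetical claim that the two are reformulations of each other fails in the non-bi-invariant directions), and an abstract Bochner--Raikov theorem for commutative Banach $*$-algebras can only place a representing measure on the hermitian spectrum. Indeed, if your argument were sufficient as written, it would prove that every bounded hermitian spherical function is positive definite; for noncompact Gelfand pairs this is false in general --- deciding which bounded hermitian spherical functions are positive definite is precisely the (hard) spherical unitary dual problem. The missing ingredient is the group-level positivity, and this is where Godement's proof does its real work: one runs the GNS construction on $G$ itself, obtaining a unitary representation of $G$ with a $K$-fixed cyclic vector; commutativity of $\mathcal A$ makes the associated operator algebra commutative, and its spectral decomposition writes $f$ as an integral of spherical matrix coefficients of unitary representations of $G$, which are therefore automatically in $Z$. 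Any correct proof must use that $f$ is positive definite against all of $L^1(G)$, not only against bi-invariant test functions; your proposal discards exactly that hypothesis.

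In the compact case --- the only one the paper actually uses --- the gap closes, because every spherical function of a compact Gelfand pair is positive definite (the paper's remark following Proposition~\ref{thm:orth}, citing \cite{vD}), so the spectrum of $\mathcal A$ really is $Z$. Your concluding Peter--Weyl paragraph is then the right elementary route, and it is essentially the content of the paper's Theorem~\ref{thm:compBG}. Even there, one detail needs an argument: the summability $\sum_{\varphi\in Z}\delta(\varphi)\widehat f(\varphi)<\infty$ does not follow from Parseval alone; one first checks $\widehat f(\varphi)\ge 0$ (for instance, $f\overline{\varphi}$ is positive definite and the $\omega_G$-integral of a positive definite function is nonnegative) and then bounds each finite partial sum by $f(e_G)$, using that $f$ minus a finite sum of its isotypic components is still positive definite.
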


The {\it Plancherel measure} $\nu$ on $Z$ is a positive Radon measure depending  on the normalization of $\omega_G$ such that the following holds:

\begin{thm}(Plancherel-Godement)\label{thm:PG} 
For any $f\in C_K^\sharp(G)_c$
\begin{equation}\label{eq:God-P}
\int_G|f(x)|^2\,d\omega_G(x)=\int_Z |\widehat{f}(\varphi)|^2\,d\nu(\varphi).
\end{equation}
The mapping $f\mapsto \widehat{f}$ extends uniquely to an isometric isomorphism of $L_K^2(G)^\sharp$ onto $L^2(Z,\nu)$. 
\end{thm}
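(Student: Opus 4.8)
The plan is to establish the Plancherel identity \eqref{eq:God-P} on the dense subalgebra $C_K^\sharp(G)_c$ and then extend it by continuity. Write $f^*(x)=\overline{f(x^{-1})}$ for the natural involution; since $G$ is unimodular and $f$ is bi-invariant, one checks that $f^*\in C_K^\sharp(G)_c$, and that $(f^**g)(e_G)=\int_G g\,\overline f\,\mathrm{d}\omega_G$ for $f,g\in C_K^\sharp(G)_c$. The first substantive step is to record the behaviour of the Fourier transform \eqref{eq:Fou} on the algebra: using the spherical identity \eqref{eq:sph} in its averaged form $\int_K\varphi(yk^{-1}z)\,\mathrm{d}\omega_K(k)=\varphi(y)\varphi(z)$, together with the bi-invariance of the factors, Fubini's theorem and unimodularity, I would show that $\widehat{f*g}=\widehat f\,\widehat g$ on $Z$. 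Combined with the identity $\varphi(x^{-1})=\overline{\varphi(x)}$ valid for positive definite $\varphi$, this also yields $\widehat{f^*}=\overline{\widehat f}$, and hence $\widehat{f^**f}=|\widehat f|^2\ge 0$.

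Next I would construct the Plancherel measure as the representing measure of a positive functional. The image $\widehat{C_K^\sharp(G)_c}$ is a conjugation-closed subalgebra of $C_0(Z)$ that separates the points of $Z$ and vanishes at no point (both because distinct spherical functions are distinguished by integration against the dense algebra, and because $\varphi(e_G)=1$), so by Stone--Weierstrass the linear span $\mathcal D$ of the functions $\overline{\widehat f}\,\widehat g=\widehat{f^**g}$ is dense in $C_0(Z)$. On $\mathcal D$ I define $I\big(\widehat{f^**g}\big):=(f^**g)(e_G)=\int_G g\,\overline f\,\mathrm{d}\omega_G$. Its well-definedness amounts to the injectivity of $f\mapsto\widehat f$ on $C_K^\sharp(G)_c$ (the semisimplicity of the commutative convolution algebra, equivalently the uniqueness clause of the Bochner--Godement Theorem~\ref{thm:BG}), since a vanishing Fourier transform forces $\sum_i f_i^**g_i=0$, hence the value $\sum_i(f_i^**g_i)(e_G)=0$. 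The \emph{main obstacle} is to show that $I$ is a genuine positive Radon functional: formally $I$ is positive on squares, $I(|\widehat f|^2)=\int_G|f|^2\,\mathrm{d}\omega_G\ge 0$, but because the Plancherel measure is in general unbounded, $I$ is not sup-norm continuous on $C_0(Z)$, and promoting this to order-positivity on $C_c(Z)$ is exactly the deep analytic content supplied by Godement's argument. Granting it, the Riesz--Markov theorem produces a unique positive Radon measure $\nu$ on $Z$ with $I(\psi)=\int_Z\psi\,\mathrm{d}\nu$, and in particular
$$\int_G|f(x)|^2\,\mathrm{d}\omega_G(x)=I(|\widehat f|^2)=\int_Z|\widehat f(\varphi)|^2\,\mathrm{d}\nu(\varphi),$$
which is \eqref{eq:God-P}.

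Finally, the isometric isomorphism statement follows from density. Since $C_K^\sharp(G)_c$ is dense in $L_K^2(G)^\sharp$ (for instance via a bi-invariant approximate identity) and the Plancherel formula shows $f\mapsto\widehat f$ is isometric for the respective $L^2$-norms, the transform extends uniquely to an isometry of $L_K^2(G)^\sharp$ into $L^2(Z,\nu)$. Surjectivity holds because the image contains $\widehat{C_K^\sharp(G)_c}$, which is dense in $C_0(Z)$ and therefore dense in $L^2(Z,\nu)$, so the closed range exhausts $L^2(Z,\nu)$. All steps except the positivity and Radon-representability of $I$ are routine; that single point is where the real difficulty—and Godement's contribution—resides.
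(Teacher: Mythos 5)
Your overall strategy (convolution $*$-algebra, multiplicativity $\widehat{f*g}=\widehat f\,\widehat g$, $\widehat{f^*}=\overline{\widehat f}$, Stone--Weierstrass density of $\mathcal D$ in $C_0(Z)$, then a representing measure for a positive functional) is the right family of ideas, and those algebraic preliminaries are correct. Note first that the paper offers no proof to compare with: Theorem~\ref{thm:PG} is quoted as background, the Plancherel measure being \emph{defined} as the measure for which \eqref{eq:God-P} holds, with references to Godement and van Dijk; in the compact case actually used later, the theorem degenerates to Parseval's identity for the orthogonal system of Theorem~\ref{thm:compBG}. Judged on its own terms, however, your argument has a genuine gap, and you concede it yourself with the words ``granting it'' at the decisive point. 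What you actually establish is positivity of $I$ on elements $\overline h h$, $h\in\mathcal D$; that is not enough for Riesz--Markov, because a non-negative function in $C_c(Z)$ (or even in $\mathcal D$) need not be of the form $\overline h h$ with $h\in\mathcal D$, and since $I$ is not sup-norm bounded when $\nu$ is infinite, no extension by continuity is available. This is not a technicality to be outsourced: it \emph{is} the theorem. The classical resolution avoids the functional altogether: for each $f\in C_K^\sharp(G)_c$ the function $f^**f$ lies in $\mathcal P_K^\sharp(G)$, so Theorem~\ref{thm:BG} gives a finite measure $\mu_f$ with $f^**f=\int_Z\varphi\,d\mu_f$; one proves the compatibility relation $|\widehat g|^2\,d\mu_f=|\widehat f|^2\,d\mu_g$ and glues the measures $d\mu_f/|\widehat f|^2$ on the open cover of $Z$ by the sets $\{\widehat f\neq 0\}$ into a single positive Radon measure $\nu$, after which \eqref{eq:God-P} follows by evaluating $f^**f$ at $e_G$.

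Two subsidiary steps are also unjustified. First, well-definedness of $I$ requires injectivity of $f\mapsto\widehat f$ \emph{restricted to} $Z$, and neither of your citations delivers it: semisimplicity of the commutative convolution algebra gives injectivity of the Gelfand transform on the full spectrum (all \emph{bounded} spherical functions), not on the subset $Z$ of positive definite ones, while the uniqueness clause of Theorem~\ref{thm:BG} concerns the map $\mu\mapsto\int_Z\varphi\,d\mu$, not the Fourier transform. What is needed is that positive definite spherical functions separate $C_K^\sharp(G)_c$ --- a GNS/C*-algebra fact (characters of the reduced C*-completion are positive, hence lie in $Z$, and the regular representation is faithful), or again a byproduct of the gluing construction. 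Second, your surjectivity argument rests on ``dense in $C_0(Z)$, therefore dense in $L^2(Z,\nu)$'', which is false for an infinite measure $\nu$: uniform density gives no control of $L^2$-errors off compact sets. (For example, with $\nu$ Lebesgue measure on $\R$ and $w\in L^2\setminus L^1$ continuous, the kernel of $h\mapsto\int hw\,d\nu$ on $C_c$ is sup-norm dense in $C_0$ but has $L^2$-closure contained in the proper subspace orthogonal to $w$.) The standard route instead observes that the closure of the image is a closed submodule of $L^2(\nu)$ under multiplication by $C_0(Z)$, hence of the form $L^2(E,\nu)$ for a measurable $E\subset Z$, and that $E=Z$ up to $\nu$-null sets because the open sets $\{\widehat f\neq 0\}$, $f\in C_K^\sharp(G)_c$, cover $Z$.
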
 

We finally need the following {\it inversion theorem}, cf. \cite[p. 84]{vD}.

\begin{thm} \label{thm:IN} For $f\in \mathcal P_K^\sharp(G)\cap L^1(G)$ we have $\widehat{f}\in L^1(Z,\nu)$
and the unique measure $\mu\in M_b(Z)$ such that \eqref{eq:BG} holds has density $\widehat{f}$
with respect to the Plancherel measure $\nu$, i.e.,
\begin{equation}\label{eq:IT}
f(x)=\int_Z \varphi(x) \widehat{f}(\varphi)\,d\nu(\varphi),\quad x\in G.
\end{equation}
\end{thm}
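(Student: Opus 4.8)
The plan is to identify the Bochner--Godement measure $\mu$ of $f$ with the measure $\widehat f\,d\nu$, by evaluating the pairing $\int_G f\,\overline{h}\,d\omega_G$ in two different ways for suitably chosen test functions $h$. First I would record the preliminaries. A continuous positive definite function is bounded with $|\varphi(x)|\le\varphi(e_G)$, so $f$, being continuous, positive definite and integrable, lies in $L^1(G)\cap L^\infty(G)\subseteq L^2_K(G)^\sharp$; hence Theorem~\ref{thm:PG} applies to $f$. By Theorem~\ref{thm:BG} there is a unique $\mu\in M_b(Z)$ with $f(x)=\int_Z\varphi(x)\,d\mu(\varphi)$, and the task reduces to proving $d\mu=\widehat f\,d\nu$. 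For $g\in C_K^\sharp(G)_c$ put $g^{*}(x)=\overline{g(x^{-1})}$; a short computation using \eqref{eq:sph} to average over $K$, together with unimodularity of $G$ and $\varphi(x^{-1})=\overline{\varphi(x)}$, shows that the Fourier transform \eqref{eq:Fou} is multiplicative and conjugation-preserving on the commutative algebra $C_K^\sharp(G)_c$, i.e. $\widehat{g_1*g_2}=\widehat{g_1}\,\widehat{g_2}$ and $\widehat{g^{*}}=\overline{\widehat g}$. In particular $h:=g^{*}*g\in C_K^\sharp(G)_c$ has $\widehat h=|\widehat g|^2\ge 0$.

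Next I would compute $\int_G f\,\overline h\,d\omega_G$ twice. Inserting \eqref{eq:BG} and applying Fubini (legitimate since $\mu$ is finite, $h\in L^1(G)$ and $|\varphi|\le 1$) gives $\int_G f\,\overline h\,d\omega_G=\int_Z\overline{\widehat h(\varphi)}\,d\mu(\varphi)=\int_Z|\widehat g(\varphi)|^2\,d\mu(\varphi)$. On the other hand, since $f,h\in L^2_K(G)^\sharp$, polarization of the Plancherel identity of Theorem~\ref{thm:PG} yields $\int_G f\,\overline h\,d\omega_G=\int_Z\widehat f(\varphi)\,\overline{\widehat h(\varphi)}\,d\nu(\varphi)=\int_Z\widehat f(\varphi)\,|\widehat g(\varphi)|^2\,d\nu(\varphi)$. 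Comparing the two expressions produces the key identity
\[
\int_Z|\widehat g(\varphi)|^2\,d\mu(\varphi)=\int_Z\widehat f(\varphi)\,|\widehat g(\varphi)|^2\,d\nu(\varphi),\qquad g\in C_K^\sharp(G)_c .
\]

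To finish I would exploit that $A:=\{\widehat g:g\in C_K^\sharp(G)_c\}$ is a conjugation-invariant subalgebra of $C_0(Z)$ that separates points of $Z$ and vanishes nowhere, hence is dense by Stone--Weierstrass; consequently the functions $|\widehat g|^2$ are sup-norm dense in the nonnegative compactly supported elements of $C_0(Z)$. Since the left-hand side of the displayed identity is nonnegative, the identity first forces $\widehat f\ge 0$ $\nu$-almost everywhere, using that $\nu$ is Radon so that $\widehat f\in L^2(Z,\nu)$ is integrable on compacta. Approximating an arbitrary $\psi\ge 0$ in $C_c(Z)$ by functions $|\widehat g|^2$ and passing to the limit then gives $\int_Z\psi\,d\mu=\int_Z\psi\,\widehat f\,d\nu$ for all $\psi\in C_c(Z)$. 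Both $\mu$ and $\widehat f\,d\nu$ are positive Radon measures, so by the Riesz representation theorem they coincide; finiteness of $\mu$ forces $\int_Z\widehat f\,d\nu=\mu(Z)<\infty$, i.e. $\widehat f\in L^1(Z,\nu)$, and substituting $d\mu=\widehat f\,d\nu$ into \eqref{eq:BG} delivers the inversion formula \eqref{eq:IT}.

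The algebraic computations (multiplicativity of the Fourier transform and the relation $\widehat{g^{*}}=\overline{\widehat g}$) are routine once \eqref{eq:sph} is used to average over $K$. I expect the main obstacle to be the final measure-theoretic passage: upgrading the pairing identity, valid a priori only against the special functions $|\widehat g|^2$, to equality of the measures $\mu$ and $\widehat f\,d\nu$. This requires the Stone--Weierstrass density of $A$ in $C_0(Z)$, extraction of the sign $\widehat f\ge 0$ from positivity of $\mu$, and a limiting argument controlled by local integrability of $\widehat f$ against $\nu$. One must also note that for $f\in L^1(G)\cap L^2_K(G)^\sharp$ the integral Fourier transform \eqref{eq:Fou} agrees $\nu$-almost everywhere with the $L^2$-Plancherel transform of Theorem~\ref{thm:PG}, so that the $\widehat f$ appearing in the statement is the same object throughout.
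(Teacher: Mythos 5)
The paper itself contains no proof of Theorem~\ref{thm:IN}: it is quoted as background from \cite[p.~84]{vD}, so your argument has to stand on its own and be measured against the standard literature proofs.

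Everything up to and including your key identity is correct: $f\in L^1(G)\cap L^\infty(G)$ gives $f\in L^2_K(G)^\sharp$; the spherical transform on $C_K^\sharp(G)_c$ is multiplicative with $\widehat{g^{*}}=\overline{\widehat g}$; Fubini against the finite measure $\mu$ plus the polarized Plancherel identity of Theorem~\ref{thm:PG} yield $\int_Z|\widehat g|^2\,d\mu=\int_Z\widehat f\,|\widehat g|^2\,d\nu$ for $g\in C_K^\sharp(G)_c$ (and the $L^1$--$L^2$ compatibility you flag at the end is indeed a routine truncation argument); also $A:=\{\widehat g\colon g\in C_K^\sharp(G)_c\}$ is dense in $C_0(Z)$ by Stone--Weierstrass. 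The gap is the final measure-theoretic passage, exactly the step you single out as the main obstacle. Your approximants $|\widehat g_n|^2$ converge to $\psi\in C_c(Z)$ only in sup norm and are not compactly supported; to conclude $\int_Z\widehat f\,|\widehat g_n|^2\,d\nu\to\int_Z\widehat f\,\psi\,d\nu$ you must bound the error $\int_Z\widehat f\,(|\widehat g_n|^2-\psi)\,d\nu$ by $\||\widehat g_n|^2-\psi\|_\infty$ times a finite constant, which requires $|\widehat f|\,d\nu$ to be a \emph{finite} measure --- precisely the assertion $\widehat f\in L^1(Z,\nu)$ being proved. Local integrability of $\widehat f$ on compacta does not help, because the tails $\int_{Z\setminus C}\widehat f\,|\widehat g_n|^2\,d\nu$ over the (possibly $\nu$-infinite) complement of a compact set are uncontrolled; one would need, say, a uniform bound on $\|\widehat g_n\|_{L^2(\nu)}=\|g_n\|_{L^2(G)}$, which Stone--Weierstrass does not provide. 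For the same reason the preliminary claim that positivity of the left-hand side ``forces $\widehat f\ge 0$ $\nu$-a.e.''\ cannot be executed with these test functions; logically it can only come after the measures have been identified. As written, the decisive step is circular.

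The repair uses your own ingredients, but keeps one transform factor attached so that every measure in sight is finite. The Fubini/Plancherel computation applies to every $h\in C_K^\sharp(G)_c$, not only to $h=g^{*}*g$, giving $\int_Z\overline{\widehat h}\,d\mu=\int_Z\widehat f\,\overline{\widehat h}\,d\nu$. Now fix $g$; since $A$ is a conjugation-closed algebra, $a\,\overline{\widehat g}\in A$ for every $a\in A$, so the two complex measures $\overline{\widehat g}\,d\mu$ and $\widehat f\,\overline{\widehat g}\,d\nu$ integrate every element of $A$ equally. Both are \emph{bounded} measures --- the second by Cauchy--Schwarz, because $\widehat f,\widehat g\in L^2(Z,\nu)$ by Theorem~\ref{thm:PG} --- hence both are sup-norm continuous functionals on $C_0(Z)$, and density of $A$ now legitimately gives $\overline{\widehat g}\,\mu=\widehat f\,\overline{\widehat g}\,\nu$ as measures. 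Since for each $\varphi_0\in Z$ there is $g$ with $\widehat g(\varphi_0)\neq 0$, the open sets $\{\widehat g\neq 0\}$ cover $Z$; dividing by $\overline{\widehat g}$ on each of them and patching with a partition of unity yields $\mu=\widehat f\,\nu$, from which everything follows: $\widehat f\ge 0$ $\nu$-a.e.\ by positivity of $\mu$, $\widehat f\in L^1(Z,\nu)$ because $\mu(Z)<\infty$, and \eqref{eq:IT} by substitution into \eqref{eq:BG}. (The textbook route is a variant of this: prove \eqref{eq:IT} for $g^{*}*g$ directly from Theorem~\ref{thm:PG}, then compare the two Bochner measures $|\widehat g|^2\mu$ and $\widehat f\,|\widehat g|^2\nu$ of $f*(g^{*}*g)=(g^{*}*g)*f$ and invoke the uniqueness in Theorem~\ref{thm:BG}.)
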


We now specialize to compact Gelfand pairs $(G,K)$, meaning that $G$ is assumed to be compact.
We shall always normalize the Haar mesure of $G$ so that $\omega_G(G)=1$.
The following result is  well-known in the context of the Peter-Weyl Theorem, but for the convenience of the reader we give a direct proof.

\begin{prop}\label{thm:orth} Let $(G,K)$ denote a compact Gelfand pair. Two different functions
$\varphi,\psi\in Z$ are orthogonal on $G$, i.e., 
\begin{equation}\label{eq:o} 
\int_G \varphi(x)\overline{\psi(x)}\,d\omega_G(x)=0.
\end{equation}
\end{prop}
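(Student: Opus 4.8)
The statement asserts that two distinct positive definite spherical functions $\varphi,\psi\in Z$ are orthogonal in $L^2(G)$. The plan is to exploit the defining functional equation \eqref{eq:sph} for spherical functions together with the invariance of normalized Haar measure on the compact group $G$. The key idea is to compute the integral $\int_G\left(\int_K\varphi(xky)\,d\omega_K(k)\right)\overline{\psi(y)}\,d\omega_G(y)$ in two different ways and compare the results.

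First I would use \eqref{eq:sph} directly: since $\int_K\varphi(xky)\,d\omega_K(k)=\varphi(x)\varphi(y)$, integrating against $\overline{\psi(y)}$ over $G$ gives $\varphi(x)\int_G\varphi(y)\overline{\psi(y)}\,d\omega_G(y)=\varphi(x)\,I$, where $I$ denotes the inner product in \eqref{eq:o} that we wish to show vanishes. Second, I would interchange the order of integration (justified by continuity and compactness) and evaluate the $y$-integral first. For the inner integral $\int_G\varphi(xky)\overline{\psi(y)}\,d\omega_G(y)$, the substitution $y\mapsto (xk)^{-1}y$ combined with left-invariance of $\omega_G$ converts it into $\int_G\varphi(y)\overline{\psi((xk)^{-1}y)}\,d\omega_G(y)$; then using that $\psi\in\mathcal P(G)$ satisfies $\psi(z^{-1})=\overline{\psi(z)}$ and that $\psi$ is bi-invariant with respect to $K$, I would push the $k$-average inside and invoke \eqref{eq:sph} for $\psi$ to obtain $\overline{\psi(x)}\,I$. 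The precise bookkeeping of the group-element manipulations — tracking inverses and deciding whether to apply the spherical identity to $\varphi$ or to $\psi$ — is the delicate part, and I would need to use that $\omega_G$ is also right-invariant (i.e. $G$ is unimodular, which holds for Gelfand pairs) to move freely.

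Comparing the two evaluations yields $\varphi(x)\,I=\overline{\psi(x)}\,I$ for every $x\in G$. If $I\neq 0$ this forces $\varphi(x)=\overline{\psi(x)}$ for all $x$; but then evaluating at $x=e_G$ is consistent, so to derive a contradiction I would instead conclude $\varphi\equiv\overline{\psi}$ and then argue that this makes $\varphi$ and $\psi$ the same element of $Z$. Indeed, a positive definite spherical function satisfies $\overline{\psi(x)}=\psi(x^{-1})$, and bi-invariance together with the spherical property should force $\overline{\psi}$ to again be the spherical function $\psi$ itself (real-valuedness is not needed in general, but the identification $\varphi=\overline{\psi}=\psi$ should follow from uniqueness properties of spherical functions), contradicting $\varphi\neq\psi$.

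The main obstacle will be the middle step: carrying out the change of variables and the interchange of the $K$-average with the $G$-integral so that the functional equation \eqref{eq:sph} can be applied to the \emph{second} factor $\psi$ rather than to $\varphi$. This requires handling the inverse $(xk)^{-1}=k^{-1}x^{-1}$ carefully, using left-invariance of $\omega_K$ to rewrite the $k$-integral, and exploiting unimodularity of $G$. Once the symmetric identity $\varphi(x)I=\overline{\psi(x)}I$ is established, the conclusion that $I=0$ for distinct $\varphi,\psi$ is routine.
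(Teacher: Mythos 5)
Your overall strategy --- computing $\int_G\bigl(\int_K\varphi(xky)\,d\omega_K(k)\bigr)\overline{\psi(y)}\,d\omega_G(y)$ in two ways --- is sound and close in spirit to the paper's proof, but the outcome you claim for the second evaluation is wrong, and the error is not cosmetic. Carrying out your own steps: after the substitution $y\mapsto (xk)^{-1}y$ the inner integral becomes $\int_G\varphi(y)\overline{\psi(k^{-1}x^{-1}y)}\,d\omega_G(y)$, and left $K$-invariance of $\psi$ makes the $k$-dependence disappear entirely, leaving $\int_G\varphi(y)\overline{\psi(x^{-1}y)}\,d\omega_G(y)$; at this point \eqref{eq:sph} cannot yet be applied (no $K$-average remains), so one must re-introduce one: replace $y$ by $ky$, use left $K$-invariance of $\varphi$ and invariance of $\omega_G$, average over $k\in K$, and then \eqref{eq:sph} applied to $\psi$ gives $\overline{\psi(x^{-1})\psi(y)}$. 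Since $\overline{\psi(x^{-1})}=\psi(x)$ for $\psi\in\mathcal P(G)$, the second evaluation equals $\psi(x)\,I$, \emph{not} $\overline{\psi(x)}\,I$: the conjugation and the inverse cancel each other. That your claimed identity $\varphi(x)I=\overline{\psi(x)}I$ cannot be right is seen by testing it on $\varphi=\psi$ (the derivation nowhere uses distinctness): then $I=1/\delta(\psi)>0$ and the identity would force $\psi$ to be real-valued, which fails e.g.\ for the spherical functions $R^{q-2}_{m,n}$, $m\neq n$, of the pair $(U(q),U(q-1))$ in Section 6, and for any non-real character of a compact abelian group.

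The error propagates into your endgame, which is genuinely invalid: from $\varphi=\overline{\psi}$ one cannot conclude $\varphi=\psi$. The dual space $Z$ is stable under complex conjugation ($\overline{\psi}$ is again a positive definite spherical function), and $\overline{\psi}$ is in general a \emph{different} element of $Z$ --- again $\overline{R^{q-2}_{m,n}}=R^{q-2}_{n,m}$, or $\overline{\chi}=\chi^{-1}$ for characters --- so there is no ``uniqueness property'' that identifies $\overline\psi$ with $\psi$. With the identity in the form you state it, the argument could never establish orthogonality of the genuinely distinct pair $\varphi=\overline{\psi}\neq\psi$, although orthogonality does hold for such pairs. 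With the corrected identity $\varphi(x)I=\psi(x)I$ the proof closes immediately: $\varphi\neq\psi$ forces $I=0$, and no conjugation argument is needed. It is worth noting that your route, once corrected, differs mildly from the paper's: the paper proves $a\varphi=\psi*\varphi$ and $a\psi=\varphi*\psi$ and then invokes commutativity of the convolution algebra $C_K^\sharp(G)_c$ (the defining Gelfand property), whereas the two-evaluations argument sketched above never uses commutativity --- it only uses \eqref{eq:sph} for both functions, bi-invariance, positive definiteness of $\psi$, and invariance of Haar measure.
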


 \begin{proof} Defining
$$
a=\int_G \varphi(x)\overline{\psi(x)}\,d\omega_G(x),
$$
we get for $y\in G$
\begin{eqnarray*} 
&& a\varphi(y)=\int_G \varphi(x)\varphi(y)\overline{\psi(x)}\,d\omega_G(x)
=\int_G\int_K\varphi(xky)\,d\omega_K(k)\overline{\psi(x)}\,d\omega_G(x)\\
&=&\int_K\int_G\varphi(xky)\overline{\psi(x)}\,d\omega_G(x) d\omega_K(k)
=\int_K\int_G\varphi(xy)\overline{\psi(xk^{-1})}\,d\omega_G(x)d\omega_K(k)\\
&=& \int_G \varphi(xy)\overline{\psi(x)}d\omega_G(x)=\int_G \psi(x)\varphi(x^{-1}y)d\omega_G(x)=\psi *\varphi(y),
\end{eqnarray*}
where we first inserted \eqref{eq:sph} for $\varphi\in Z$ and changed the order of integration. For the fourth equality sign we replaced $x$ by  $xk^{-1}$  and used invariance of integration. Next we used that $\psi$ is right invariant under $K$ so the inner integral is independent of $k$. Finally,  we used that $\overline{\psi(x)}=\psi(x^{-1})$ and replaced $x$ by $x^{-1}$ in the integration.

Similarly we get
\begin{eqnarray*}
&& a\psi(y)=\int_G \varphi(x)\psi(x^{-1})\psi(y)d\omega_G(x)=
\int_G\varphi(x)\int_K\psi(x^{-1}ky)d\omega_K(k)d\omega_G(x)\\
&=&\int_K\int_G \varphi(kx)\psi(x^{-1}y)d\omega_G(x)d\omega_K(k)=\varphi*\psi(y).
\end{eqnarray*}
Using that convolution is commutative we get $a(\varphi(y)-\psi(y))=0$ for all $y\in G$, but since $\varphi\neq \psi$, we get $a=0$. 
\end{proof}

It follows from Proposition~\ref{thm:orth} that the dual space $Z$  of a compact Gelfand pair 
is  discrete, and if $G$ is metrizable then $Z$  is a countable set.

\begin{rem}{\rm Any spherical function for a compact Gelfand pair is automatically  positive definite,
so the dual space $Z$ is the set of all spherical functions. For a proof see \cite[p. 86]{vD}.
}
\end{rem}

The compact homogeneous space $G/K$ of left cosets $\xi=xK, x\in G$ carries a unique  probability measure $d\xi$, which is invariant under the $G$-action $G \times G/K\to G/K$ given by 
\begin{equation}\label{eq:G/K}
(g,\xi)\to g\xi=(gx)K,\;\mbox{if}\; \xi=xK,\quad g,x\in G.
\end{equation}
 Functions $F:G/K\to \C$ are in one-to-one correspondence with functions $f:G\to\C$ which are right invariant functions under $K$, i.e., $f(g)=f(gk)$ for all $g\in G, k\in K$. The correspondence is given by $F(gK)=f(g), g\in G$ and the following integral formula holds
 \begin{equation}\label{eq:G/K1} 
\int_G f(x)\,d\omega_G(x)=\int_{G/K} F(g\xi)\,d\xi=\int_{G/K} F(\xi)\,d\xi, \quad g\in G.
\end{equation}

For each $\varphi\in Z$ and $g\in G$ the function $x\mapsto \varphi(g^{-1}x)$ on $G$ is right invariant under $K$, so it can be considered as a function
$\varphi_g\in C(G/K)$. It is a classical fact that 
\begin{equation}\label{eq:hfi}
H_\varphi:=\span \{\varphi_g \mid g\in G\}
\end{equation}
is a finite dimensional subspace of $C(G/K)$. The dimension of $H_\varphi$ is denoted $\d(\varphi)$.

The spaces $H_\varphi,\varphi\in Z$,  are mutually orthogonal in  $L^2(G/K,d\xi)$ which they span.  

It is known that the Plancherel measure is given by $\nu(\{\varphi\})=\d(\varphi)$ for $\varphi\in Z$, and the spherical functions from $Z$ form an orthogonal system, cf. Proposition~\ref{thm:orth}.

More precisely we have:
\begin{equation}\label{eq:OS} 
\int_G \varphi(x)\overline{\psi(x)}\,d\omega_G(x)=\left\{\begin{array}{ll}
0  & \;\mbox{if}\; \varphi\neq \psi,\\
1/\d(\varphi) & \;\mbox{if}\; \varphi=\psi.
\end{array}
\right.
\end{equation}

The previous three theorems take the following form, which gives  the expansion of functions
in $L_K^2(G)^\sharp$ after the orthonormal basis $\sqrt{\d(\varphi)}\varphi, \varphi\in Z$: 

\begin{thm}\label{thm:compBG} Let $(G,K)$ be a compact Gelfand pair.

(i) Each $f\in L_K^2(G)^\sharp$  has the orthogonal expansion
$$
f(x)\sim \sum_{\varphi\in Z} \d(\varphi)\widehat{f}(\varphi)\varphi(x),
$$
which converges in $L^2(G)$, and Parseval's formula holds:
$$
\int_G|f(x)|^2\,d\omega_G(x)=\sum_{\varphi\in Z}|<f,\sqrt{\d(\varphi)}\varphi>|^2=
\sum_{\varphi\in Z} \d(\varphi)|\widehat{f}(\varphi)|^2.
$$

(ii) A function $f\in C_K^\sharp(G)$ belongs to $\mathcal P_K^\sharp(G)$ if and only if there exist a family $(B(\varphi))_{\varphi\in Z}$ of non-negative numbers satisfying $\sum_{\varphi\in Z} B(\varphi)<\infty$ such that
\begin{equation}\label{eq:cgp}
f(x)=\sum_{\varphi\in Z} B(\varphi)\varphi(x),\quad x\in G,
\end{equation}
and $B(\varphi)=\d(\varphi)\widehat{f}(\varphi)$. The series in \eqref{eq:cgp} is uniformly convergent on $G$.
\end{thm}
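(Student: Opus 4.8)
The plan is to deduce both parts from the Bochner--Godement theorem (Theorem~\ref{thm:BG}) and the Plancherel--Godement theorem (Theorem~\ref{thm:PG}), exploiting that for a compact Gelfand pair the dual space $Z$ is discrete (Proposition~\ref{thm:orth} and the following remark) and that the Plancherel measure is the weighted counting measure $\nu(\{\varphi\})=\d(\varphi)$. The effect of discreteness is that every integral over $Z$ appearing in those theorems becomes a sum, and the role of the orthogonality relations \eqref{eq:OS} is to turn the abstract Fourier transform into the concrete coefficient $\d(\varphi)\widehat f(\varphi)$.

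For part~(i), I would first read off from \eqref{eq:OS} that $\{\sqrt{\d(\varphi)}\,\varphi:\varphi\in Z\}$ is an orthonormal system in $L_K^2(G)^\sharp$. To establish completeness I would invoke Theorem~\ref{thm:PG}: since $\nu$ is the weighted counting measure, the functions $e_\psi:=\d(\psi)^{-1/2}\mathbf 1_{\{\psi\}}$ form an orthonormal basis of $L^2(Z,\nu)$, and a one-line computation with \eqref{eq:OS} shows that the Fourier transform sends $\sqrt{\d(\psi)}\,\psi$ to $e_\psi$. As $f\mapsto\widehat f$ is a surjective isometry onto $L^2(Z,\nu)$, the orthonormal system $\{\sqrt{\d(\varphi)}\,\varphi\}$ is therefore complete. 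The general Hilbert-space expansion then gives $f=\sum_\varphi\langle f,\sqrt{\d(\varphi)}\,\varphi\rangle\sqrt{\d(\varphi)}\,\varphi$ with convergence in $L^2(G)$, and since $\langle f,\sqrt{\d(\varphi)}\,\varphi\rangle=\sqrt{\d(\varphi)}\,\widehat f(\varphi)$ the coefficient collapses to $\d(\varphi)\widehat f(\varphi)$; Parseval's identity is the corresponding equality of norms.

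For part~(ii), the ``if'' direction is elementary: each $\varphi\in Z$ is positive definite with $\varphi(e_G)=1$ and $|\varphi(x)|\le\varphi(e_G)=1$, so the bound $|B(\varphi)\varphi(x)|\le B(\varphi)$ together with $\sum_\varphi B(\varphi)<\infty$ yields uniform convergence of \eqref{eq:cgp} by the Weierstrass $M$-test. Hence $f$ is continuous and bi-invariant, and it is positive definite as a uniform limit of the partial sums, each of which is a non-negative combination of positive definite functions; thus $f\in\mathcal P_K^\sharp(G)$. For the ``only if'' direction I would apply Theorem~\ref{thm:BG} to obtain a unique $\mu\in M_b(Z)$ with $f(x)=\int_Z\varphi(x)\,d\mu(\varphi)$; discreteness of $Z$ rewrites this as $f(x)=\sum_\varphi B(\varphi)\varphi(x)$ with $B(\varphi):=\mu(\{\varphi\})\ge0$, and evaluating at $x=e_G$ gives $\sum_\varphi B(\varphi)=\mu(Z)=f(e_G)<\infty$. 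Uniform convergence then follows exactly as in the ``if'' part, and this justifies multiplying by $\overline{\varphi}$ and integrating term by term: by \eqref{eq:OS} only the term $\psi=\varphi$ survives, giving $\widehat f(\varphi)=B(\varphi)/\d(\varphi)$, i.e.\ $B(\varphi)=\d(\varphi)\widehat f(\varphi)$.

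The work here is almost entirely a matter of correctly repackaging the cited theorems once discreteness of $Z$ is used; the only two points needing genuine care are the completeness argument in part~(i), which rests on the surjectivity in Theorem~\ref{thm:PG}, and the legitimacy of the term-by-term integration in part~(ii), which is secured by the uniform convergence established just before it.
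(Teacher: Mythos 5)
Your proposal is correct and follows essentially the route the paper intends: the paper states this theorem without a separate proof, presenting it as the specialization of Theorems~\ref{thm:BG} and \ref{thm:PG} to the compact case via discreteness of $Z$, the identity $\nu(\{\varphi\})=\d(\varphi)$, and the orthogonality relations \eqref{eq:OS}, which is exactly what you carry out. The only (harmless) deviation is that you identify $B(\varphi)=\d(\varphi)\widehat f(\varphi)$ by term-by-term integration against $\overline{\varphi}$ using uniform convergence, where the paper would instead invoke the inversion theorem (Theorem~\ref{thm:IN}) to read off the density of $\mu$ with respect to $\nu$.
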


\section{Main Results}

We start  with a formal definition of the main class to be discussed.

Let $(G,K)$ denote a compact Gelfand pair, and let $L$ denote an arbitrary locally compact group with neutral element $e_L$. We shall consider a subset of  the set $\mathcal P(G\times L)$ of continuous  positive definite functions  on the locally compact group $G\times L$.

\begin{defn}  The set of continuous positive definite functions $f:G\times L\to\C$ which are bi-invariant with respect to $K$ in the first variable is denoted $\mathcal P_K^\sharp(G,L)$.
\end{defn}

The following Proposition states some properties of $\mathcal P_K^\sharp(G,L)$ which are easily obtained. The proofs are left to the reader.

\begin{prop}\label{thm:elem}
\begin{enumerate}
\item[{\rm (i)}] For $f_1,f_2\in\mathcal P_K^\sharp(G,L)$ and $r\ge 0$ we have $rf_1, f_1+f_2$, $f_1 f_2\in\mathcal P_K^\sharp(G,L)$. 
\item[{\rm (ii)}] For a net of functions $(f_i)_{i\in I}$ from $\mathcal P_K^\sharp(G,L)$ converging pointwise to a continuous function $f:G\times L\to \C$, we have $f\in\mathcal P_K^\sharp(G,L)$.
\item[{\rm (iii)}] For $f\in\mathcal P_K^\sharp(G,L)$ we have $f(\cdot,e_L)\in\mathcal P_K^\sharp(G)$ and $f(e_G,\cdot)\in\mathcal P(L)$.
\item[{\rm (iv)}] For $f\in\mathcal P_K^\sharp(G)$ and $g\in\mathcal P(L)$ we have $f\otimes g\in\mathcal P_K^\sharp(G,L)$, where
$f\otimes g(x,u):=f(x)g(u)$ for $(x,u)\in G\times L$. In particular we have
$f\otimes 1_L\in \mathcal P_K^\sharp(G,L)$ and $f\mapsto f\otimes 1_L$ is an embedding of $\mathcal P_K^\sharp(G)$ into $\mathcal P_K^\sharp(G,L)$.
\end{enumerate}
\end{prop}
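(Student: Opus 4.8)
The plan is to verify each clause directly against the definition of positive definiteness on the product group $G\times L$: a continuous $f$ belongs to $\mathcal P_K^\sharp(G,L)$ exactly when it is bi-invariant in the first variable and the kernel $((x,u),(y,v))\mapsto f(x^{-1}y,u^{-1}v)$ is positive definite in the sense of the introduction. Under all four operations continuity and bi-invariance in the first variable are preserved by inspection (for instance $f_1(kxl,u)f_2(kxl,u)=f_1(x,u)f_2(x,u)$), so in each case the real content is positive definiteness of the resulting kernel. The single recurring tool is the Schur product theorem, which asserts that the entrywise (Hadamard) product of two positive semidefinite matrices of the same size is again positive semidefinite, cf. \cite{B:C:R}.

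For (i), closure under multiplication by $r\ge 0$ and under addition is immediate, since a non-negative multiple, respectively a sum, of positive semidefinite matrices is positive semidefinite. For the product $f_1f_2$ I would fix points $(x_1,u_1),\ldots,(x_n,u_n)\in G\times L$ and observe that the matrix with entries $f_1(x_j^{-1}x_k,u_j^{-1}u_k)\,f_2(x_j^{-1}x_k,u_j^{-1}u_k)$ is the Hadamard product of the two positive semidefinite matrices attached to $f_1$ and $f_2$ on these same $n$ points; the Schur product theorem then yields positive definiteness. For (ii), positive semidefiniteness of a fixed $n\times n$ matrix is a closed condition, so if $(f_i)_{i\in I}$ converges pointwise to $f$ then each inequality $\sum_{j,k}f_i(x_j^{-1}x_k,u_j^{-1}u_k)c_j\overline{c_k}\ge 0$ passes to the limit; continuity of $f$ is assumed and bi-invariance survives the pointwise limit, giving $f\in\mathcal P_K^\sharp(G,L)$.

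For (iii), I would use that the restriction of a positive definite function to a subgroup is again positive definite. Since $G\times\{e_L\}$ and $\{e_G\}\times L$ are subgroups of $G\times L$ isomorphic to $G$ and $L$, restricting $f$ gives that $f(\cdot,e_L)$ is positive definite on $G$ and $f(e_G,\cdot)$ is positive definite on $L$; bi-invariance of $f(\cdot,e_L)$ is inherited from $f$ and continuity is clear, whence $f(\cdot,e_L)\in\mathcal P_K^\sharp(G)$ and $f(e_G,\cdot)\in\mathcal P(L)$. For (iv), continuity and bi-invariance of $f\otimes g$ follow from $f\otimes g(kxl,u)=f(x)g(u)=f\otimes g(x,u)$. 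For positive definiteness I would again fix $n$ points and write the matrix $[f(x_j^{-1}x_k)\,g(u_j^{-1}u_k)]$ as the Hadamard product of $[f(x_j^{-1}x_k)]$ and $[g(u_j^{-1}u_k)]$, both positive semidefinite of size $n$, and invoke the Schur product theorem. Taking $g=1_L$, which lies in $\mathcal P(L)$ since it produces the all-ones matrix, yields $f\otimes 1_L\in\mathcal P_K^\sharp(G,L)$, and the linearity and injectivity of $f\mapsto f\otimes 1_L$ exhibit it as an embedding of $\mathcal P_K^\sharp(G)$ into $\mathcal P_K^\sharp(G,L)$.

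None of these steps is deep, so there is no serious obstacle; the one point requiring care is purely a matter of bookkeeping. Because positive definiteness of $f$ on $G\times L$ refers to the kernel $f(x^{-1}y,u^{-1}v)$ on the product group, one must keep straight that in the Schur product arguments for (i) and (iv) the two matrices being multiplied entrywise are genuinely indexed by the same $n$ points of $G\times L$; once this is observed, every clause reduces to a standard property of positive semidefinite matrices.
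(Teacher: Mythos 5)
Your proof is correct: the paper itself states that the proofs of this proposition ``are left to the reader,'' and your verification via the Schur product theorem (for the products in (i) and (iv)), closedness of the positive semidefiniteness inequalities under pointwise limits (for (ii)), and restriction of a positive definite function to the subgroups $G\times\{e_L\}$ and $\{e_G\}\times L$ (for (iii)) is exactly the routine argument the authors intend. No gaps; the bookkeeping point you flag---that both matrices in each Hadamard product are indexed by the same $n$ points of $G\times L$---is handled correctly.
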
 

Our first main theorem can be stated as follows. 
Note that a function $B:Z\to \mathcal P(L)$ can be considered as a family $(B(\varphi))_{\varphi\in Z}$ in $\mathcal P(L)$. In the applications $Z$ is a countable set, so we can think of
 $(B(\varphi))_{\varphi\in Z}$ as a sequence from
$\mathcal P(L)$. 

\begin{thm}\label{thm:main} Let $(G,K)$ denote a compact Gelfand pair, let $L$ be a locally compact group and let  $f:G\times L\to \mathbb C$ be a continuous function. Then
$f$ belongs to $\mathcal P_K^\sharp(G,L)$ if and only if there exists
 a function $B:Z\to \mathcal P(L)$ satisfying  $\sum_{\varphi\in Z} B(\varphi)(e_L)<\infty$ 
such that
\begin{equation}\label{eq:expand}
f(x,u)=\sum_{\varphi\in Z} B(\varphi)(u)\varphi(x),\quad x\in G, \;u\in L.
\end{equation}
The above expansion is uniformly convergent for $(x,u)\in G\times L$, and we have
\begin{equation}\label{eq:coef}
B(\varphi)(u)=\d(\varphi)\int_G f(x,u)\overline{\varphi(x)}\,{\rm d}\omega_G(x)=\d(\varphi)
\widehat{f(\cdot,u)}(\varphi),
 \quad u\in L.
\end{equation}
\end{thm}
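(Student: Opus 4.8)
The plan is to prove the two implications separately, with the "only if" direction (from $\mathcal P_K^\sharp(G,L)$ to the expansion) being the substantial one, and the "if" direction following quickly from the elementary properties already collected.

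For the \emph{sufficiency} direction, suppose $f$ has the form \eqref{eq:expand} with $B:Z\to\mathcal P(L)$ and $\sum_{\varphi\in Z}B(\varphi)(e_L)<\infty$. Each summand $(x,u)\mapsto B(\varphi)(u)\varphi(x)$ is the tensor product $\varphi\otimes B(\varphi)$, and since $\varphi\in Z$ is positive definite and bi-invariant while $B(\varphi)\in\mathcal P(L)$, Proposition~\ref{thm:elem}(iv) gives $\varphi\otimes B(\varphi)\in\mathcal P_K^\sharp(G,L)$. The key analytic point is uniform convergence: using $|\varphi(x)|\le\varphi(e_G)=1$ and $|B(\varphi)(u)|\le B(\varphi)(e_L)$, the series is dominated by the convergent numerical series $\sum_{\varphi\in Z}B(\varphi)(e_L)$, so it converges uniformly and $f$ is continuous. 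The finite partial sums lie in $\mathcal P_K^\sharp(G,L)$ by Proposition~\ref{thm:elem}(i), and since they converge pointwise to the continuous limit $f$, Proposition~\ref{thm:elem}(ii) places $f$ in $\mathcal P_K^\sharp(G,L)$.

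For the \emph{necessity} direction, assume $f\in\mathcal P_K^\sharp(G,L)$. First I would \textbf{define} $B(\varphi)$ by formula \eqref{eq:coef}, i.e.\ $B(\varphi)(u)=\delta(\varphi)\widehat{f(\cdot,u)}(\varphi)$, and verify $B(\varphi)\in\mathcal P(L)$. For fixed $u_1,\dots,u_n\in L$ and scalars $c_1,\dots,c_n$, the quadratic form $\sum_{j,k}B(\varphi)(u_j^{-1}u_k)c_j\overline{c_k}$ should be rewritten, using the definition, as an integral over $G\times G$ of $f$ evaluated at pairs of points, against the bi-invariant kernel built from $\varphi$; positive definiteness of $f$ on $G\times L$ together with the reproducing/positivity structure of the spherical function $\varphi$ (encoded in \eqref{eq:sph}) then forces this to be nonnegative. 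Next, for each fixed $u$ the section $f(\cdot,u)$ lies in $\mathcal P_K^\sharp(G)$ by Proposition~\ref{thm:elem}(iii), so Theorem~\ref{thm:compBG}(ii) gives a uniformly convergent expansion $f(x,u)=\sum_{\varphi}\delta(\varphi)\widehat{f(\cdot,u)}(\varphi)\varphi(x)=\sum_\varphi B(\varphi)(u)\varphi(x)$, which is exactly \eqref{eq:expand} pointwise in $u$. The summability $\sum_\varphi B(\varphi)(e_L)<\infty$ follows by taking $x=u=e_L$ (equivalently $u=e_L$ and integrating, or invoking Theorem~\ref{thm:compBG}(ii) applied to the section $f(\cdot,e_L)\in\mathcal P_K^\sharp(G)$), since the nonnegative coefficients for a member of $\mathcal P_K^\sharp(G)$ are summable.

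The \textbf{main obstacle} I expect is upgrading the pointwise-in-$u$ expansion to \emph{joint} uniform convergence on $G\times L$. The per-$u$ expansion from Theorem~\ref{thm:compBG} is uniform in $x$ but a priori depends on $u$; to get uniformity jointly one again uses the domination $|B(\varphi)(u)\varphi(x)|\le B(\varphi)(e_L)$, so the whole series is majorized by $\sum_\varphi B(\varphi)(e_L)<\infty$ uniformly over all $(x,u)$. The delicate part is therefore establishing $\sum_\varphi B(\varphi)(e_L)<\infty$ \emph{before} this domination can be invoked; I would handle this by observing that $B(\varphi)(e_L)=\delta(\varphi)\widehat{f(\cdot,e_L)}(\varphi)$ are precisely the nonnegative expansion coefficients of $f(\cdot,e_L)\in\mathcal P_K^\sharp(G)$, whose summability is guaranteed by Theorem~\ref{thm:compBG}(ii). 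Once summability is in hand, the Weierstrass $M$-test gives uniform convergence and continuity of the sum, completing the argument.
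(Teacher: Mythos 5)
Your overall architecture (define $B(\varphi)$ by \eqref{eq:coef}, deduce $B(\varphi)\in\mathcal P(L)$ from positive definiteness of $f$, get summability from the section at $e_L$ via Bochner--Godement, finish with the Weierstrass $M$-test) is the same as the paper's, and your sufficiency direction coincides with the paper's remark after the theorem. However, your necessity direction contains a genuine error: you assert that for \emph{each fixed} $u\in L$ the section $f(\cdot,u)$ lies in $\mathcal P_K^\sharp(G)$ ``by Proposition~\ref{thm:elem}(iii)'' and then apply Theorem~\ref{thm:compBG}(ii) to it. Proposition~\ref{thm:elem}(iii) asserts this only for $u=e_L$, and the claim is false for general $u$: take $G=L=\Z/2\Z$ with $K=\{e_G\}$ and $f(x,u)=(-1)^{x+u}$; this is a character of $G\times L$, hence belongs to $\mathcal P_K^\sharp(G,L)$, yet $f(\cdot,1)=-(-1)^{x}$ takes the value $-1$ at $e_G$ and so is not positive definite on $G$. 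Thus Theorem~\ref{thm:compBG}(ii) cannot deliver the pointwise expansion \eqref{eq:expand} at fixed $u\neq e_L$, and this is precisely the step the paper treats differently: for general $u$ the section $f(\cdot,u)$ is merely continuous and bi-invariant, hence in $L^2_K(G)^\sharp$, so its orthogonal expansion converges only in $L^2(G)$; once $\sum_{\varphi}B(\varphi)(e_L)<\infty$ is known (which you obtain correctly from the section at $e_L$), the $M$-test shows the series converges uniformly to some continuous $\tilde f$ on $G\times L$, and then $\tilde f(\cdot,u)=f(\cdot,u)$ almost everywhere for each $u$ by uniqueness of $L^2$-limits, hence everywhere by continuity.

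There is a second, smaller gap in your verification that $B(\varphi)\in\mathcal P(L)$. The quadratic form $\sum_{j,k}B(\varphi)(u_j^{-1}u_k)c_j\overline{c_k}$ is, after your rewriting, an integral of a positive definite function against the non-discrete measure $\omega_G\otimes\sigma$ with $\sigma=\sum_j c_j\delta_{u_j}$, whereas positive definiteness of $f$ is a priori only a statement about finite sums. Passing from finite sums to integrals against such measures requires an approximation argument; this is exactly the content of the paper's Lemma~\ref{thm:top} and Lemma~\ref{thm:equi}, applied not to $f$ itself but to the function $f(x,u)\overline{\varphi(x)}$, which lies in $\mathcal P_K^\sharp(G,L)$ as a product of positive definite functions (Proposition~\ref{thm:elem}). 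Your phrase ``the reproducing/positivity structure of the spherical function $\varphi$ \ldots forces this to be nonnegative'' hides both of these ingredients, and they constitute the main technical content of the paper's proof; with them supplied, and with the section argument repaired as above, your plan does go through.
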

When $L$ denotes the group consisting just of the neutral element, Theorem~\ref{thm:main} reduces to Bochner-Godement's Theorem. 

We give the proof in the next section, but notice that the infinite series in Equation \eqref{eq:expand} is uniformly convergent by Weierstrass' M-test since
$|B(\varphi)(u)|\le B(\varphi)(e_L)$ for $u\in L$ and $|\varphi(x)|\le 1$  for $x\in G$.
It is also clear that any function given by \eqref{eq:expand} belongs to $\mathcal P_K^\sharp(G,L)$ because of Proposition~\ref{thm:elem} and the fact that by definition
 $\varphi\in \mathcal P_K^\sharp(G)$ for any $\varphi\in Z$. The main point in Theorem~\ref{thm:main} is that all functions in $\mathcal P_K^\sharp(G,L)$ have an expansion 
\eqref{eq:expand}.

\medskip
If $G$ is a compact abelian group and $K=\{e_G\}$ is trivial, then $(G,K)$ is a compact Gelfand pair. A spherical function $\varphi$ is the same as a continuous homomorphism 
$$
\varphi:G\to \T:=\{z\in\C\mid |z|=1\},
$$
i.e., a continuous group character on $G$. This means that the dual space $Z$ of the Gelfand pair is equal to the dual group $\widehat{G}$. For the space defined in Equation \eqref{eq:hfi} we have $H_\varphi=\C \varphi$ for $\varphi\in\widehat{G}$,  hence  $\d(\varphi)=1$.

Specializing Theorem~\ref{thm:main} to this Gelfand pair leads to the following result: 

\begin{thm}\label{thm:main2} Let $G$ denote a compact abelian group with dual group $\widehat{G}$, let $L$ be a locally compact group and let  $f:G\times L\to \mathbb C$ be a continuous function. Then
$f$ belongs to $\mathcal P(G\times L)$ if and only if there exists
 a function $B:\widehat{G}\to \mathcal P(L)$ satisfying  $\sum_{\varphi\in \widehat{G}} B(\varphi)(e_L)<\infty$ 
such that
\begin{equation}\label{eq:expandab}
f(x,u)=\sum_{\varphi\in \widehat{G}} B(\varphi)(u)\varphi(x),\quad x\in G, \;u\in L.
\end{equation}
The above expansion is uniformly convergent for $(x,u)\in G\times L$, and we have
\begin{equation}\label{eq:coefab}
B(\varphi)(u)=\int_G f(x,u)\overline{\varphi(x)}\,{\rm d}\omega_G(x)=
\widehat{f(\cdot,u)}(\varphi),
 \quad u\in L.
\end{equation}
\end{thm}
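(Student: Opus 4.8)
The plan is to obtain Theorem~\ref{thm:main2} as the specialization of Theorem~\ref{thm:main} to the Gelfand pair $(G,\{e_G\})$. The only work is to verify that the abelian setting really produces a compact Gelfand pair and to translate the abstract data $(Z,\d(\varphi),\mathcal P_K^\sharp(G,L))$ of the general theorem into the concrete objects $(\widehat G,\,1,\,\mathcal P(G\times L))$; once this dictionary is in place the statement follows with no further analysis.

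First I would check that $(G,\{e_G\})$ is a compact Gelfand pair in the sense of Section~2. With $K=\{e_G\}$ the bi-invariance condition \eqref{eq:bi} is vacuous, so $C_K^\sharp(G)_c$ equals the full algebra $C(G)_c\subset L^1(G)$ under the convolution \eqref{eq:conv}. Because $G$ is abelian this convolution is commutative, so $C_K^\sharp(G)_c$ is commutative and $(G,\{e_G\})$ is a Gelfand pair; since $G$ is compact it is a compact Gelfand pair, to which Theorem~\ref{thm:main} applies.

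Next I would identify the dual space $Z$ with the dual group $\widehat G$ and verify $\d(\varphi)=1$. With $K$ trivial the averaging over $K$ in the spherical equation \eqref{eq:sph} disappears, so a spherical function is exactly a continuous map $\varphi:G\to\C$ with $\varphi(xy)=\varphi(x)\varphi(y)$ and $\varphi(e_G)=1$, i.e.\ a continuous homomorphism into $\C^\times$. For such a $\varphi$ the image of the compact group $G$ is a compact subgroup of $\C^\times$, hence contained in $\T$, so $\varphi\in\widehat G$; conversely every continuous character of $G$ is a spherical function and, by the Remark following Proposition~\ref{thm:orth}, automatically positive definite, so it lies in $Z$. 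Thus $Z=\widehat G$. The space $H_\varphi$ of \eqref{eq:hfi} is spanned by the translates $x\mapsto\varphi(g^{-1}x)=\overline{\varphi(g)}\,\varphi(x)$, hence $H_\varphi=\C\varphi$ is one-dimensional and $\d(\varphi)=1$ for every $\varphi\in\widehat G$.

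Finally I would substitute these identifications into Theorem~\ref{thm:main}. Since the bi-invariance requirement on the $G$-variable is vacuous, $\mathcal P_K^\sharp(G,L)$ coincides with $\mathcal P(G\times L)$. Replacing $Z$ by $\widehat G$ and setting $\d(\varphi)=1$ turns the summability condition, the expansion and the coefficient formula of Theorem~\ref{thm:main} into exactly $\sum_{\varphi\in\widehat G}B(\varphi)(e_L)<\infty$, \eqref{eq:expandab} and \eqref{eq:coefab}, while the uniform convergence is inherited verbatim. I do not expect a substantive obstacle here: the content is entirely carried by Theorem~\ref{thm:main}, and the only step requiring a word of justification is the identification $Z=\widehat G$ together with $\d(\varphi)=1$.
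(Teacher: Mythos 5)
Your proposal is correct and takes essentially the same route as the paper: the paper also obtains Theorem~\ref{thm:main2} by specializing Theorem~\ref{thm:main} to the compact Gelfand pair $(G,\{e_G\})$, noting that the spherical functions are exactly the continuous characters (so $Z=\widehat{G}$) and that $H_\varphi=\C\varphi$ gives $\d(\varphi)=1$, with the bi-invariance condition vacuous so that $\mathcal P_K^\sharp(G,L)=\mathcal P(G\times L)$. Your write-up merely spells out a few verifications (commutativity of the convolution algebra, the image of a character lying in $\T$) that the paper leaves implicit.
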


For the special case $G=\T^N$ of the $N$-dimensional torus where $\widehat{G}=\Z^N$, we get the following result about Fourier series in $N$ variables:

\begin{cor}\label{thm:Fou} Let $L$ be a locally compact group and let $f:\R^N\times L\to\C$ be a continuous function periodic with period $2\pi$ in the $N$ real variables. Then $f\in\mathcal P(\R^N\times L)$ if and only if there exists a  multisequence $(\varphi_\mathbf{n})_{\mathbf{n}\in\Z^N}$ from $\mathcal P(L)$ 
 satisfying  
$$
\sum_{\mathbf{n}\in \Z^N} \varphi_{\mathbf{n}}(e_L)<\infty
$$ 
such that
\begin{equation}\label{eq:expandFou}
f(\mathbf{x},u)=\sum_{\mathbf{n}\in\Z^N} \varphi_\mathbf{n}(u) \exp(i\,\mathbf{n}\cdot\mathbf{x}),\quad \mathbf{x}\in \R^N, \;u\in L.
\end{equation}
The above expansion is uniformly convergent for $(\mathbf{x},u)\in \R^N\times L$, and we have
\begin{equation}\label{eq:coefFou}
\varphi_{\mathbf{n}}(u)=(2\pi)^{-N}\int_{[0,2\pi]^N} f(\mathbf{x},u) \exp(-i\,\mathbf{n}\cdot\mathbf{x})\,{\rm d}\mathbf{x},
 \quad u\in L.
\end{equation}
\end{cor}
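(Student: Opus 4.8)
The plan is to derive Corollary~\ref{thm:Fou} as a direct specialization of Theorem~\ref{thm:main2}, since the $N$-torus $\T^N$ is precisely a compact abelian group whose structure is completely explicit. The first step is to make the identification between $2\pi$-periodic functions on $\R^N$ and functions on $\T^N=(\R/2\pi\Z)^N$: a continuous function $f:\R^N\times L\to\C$ that is $2\pi$-periodic in each of the $N$ real variables descends to a well-defined continuous function on $\T^N\times L$, and conversely. Under this identification the hypothesis $f\in\mathcal P(\R^N\times L)$ translates exactly into $f\in\mathcal P(\T^N\times L)$, because the positive-definiteness condition only involves the values $f(\mathbf{x}_j-\mathbf{x}_k,\,\cdot)$ at differences of points, and those values are unchanged by passing to cosets mod $2\pi$.

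Next I would identify the dual group. For $G=\T^N$ the dual group is $\widehat{G}=\Z^N$, with the character indexed by $\mathbf{n}\in\Z^N$ given by $\mathbf{x}\mapsto\exp(i\,\mathbf{n}\cdot\mathbf{x})$; this is the standard Pontryagin duality computation for the torus, and I would simply cite it. The normalized Haar measure $\omega_G$ on $\T^N$ corresponds to $(2\pi)^{-N}\,d\mathbf{x}$ on the fundamental domain $[0,2\pi]^N$, so the integral $\int_G f(x,u)\overline{\varphi(x)}\,d\omega_G(x)$ appearing in \eqref{eq:coefab} becomes precisely the integral displayed in \eqref{eq:coefFou}. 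With these three dictionary entries in place, Theorem~\ref{thm:main2} applied to $G=\T^N$ yields the characterization: writing $\varphi_{\mathbf{n}}(u):=B(\exp(i\,\mathbf{n}\cdot\,))(u)$, the expansion \eqref{eq:expandab} becomes \eqref{eq:expandFou}, the summability condition $\sum_{\varphi\in\widehat{G}}B(\varphi)(e_L)<\infty$ becomes $\sum_{\mathbf{n}\in\Z^N}\varphi_{\mathbf{n}}(e_L)<\infty$, and the coefficient formula \eqref{eq:coefab} becomes \eqref{eq:coefFou}. The uniform convergence claim and the fact that each $\varphi_{\mathbf{n}}\in\mathcal P(L)$ are inherited verbatim from Theorem~\ref{thm:main2}.

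This is essentially a matter of relabeling, so there is no substantive mathematical obstacle; the entire content has already been established in Theorem~\ref{thm:main2}. The only point requiring a word of care is the passage between periodicity on $\R^N$ and functions on $\T^N$, and in particular checking that positive definiteness is preserved in both directions. The forward direction is immediate; for the reverse direction, one notes that any positive definite function on $\T^N\times L$ lifts to a positive definite function on $\R^N\times L$ via the quotient homomorphism $\R^N\times L\to\T^N\times L$, since positive definiteness is always preserved by pullback along a continuous group homomorphism. I would therefore spend one sentence on this identification and then state that the corollary follows by specializing Theorem~\ref{thm:main2} to $G=\T^N$ with the character and Haar-measure identifications just described.
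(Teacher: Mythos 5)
Your proposal is correct and matches the paper's own (implicit) argument: the paper states Corollary~\ref{thm:Fou} as an immediate specialization of Theorem~\ref{thm:main2} to $G=\T^N$ with $\widehat{G}=\Z^N$, exactly the dictionary you set up. Your extra care about transferring positive definiteness between $2\pi$-periodic functions on $\R^N$ and functions on $\T^N$ (restriction via lifts in one direction, pullback along the quotient homomorphism in the other) is a point the paper leaves tacit, and it is handled correctly.
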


\section{Proofs} 

\begin{lemma}\label{thm:top} Let  $H$ denote a locally compact group and let  $C\subset H$ be a non-empty compact set. For any open neighbourhood $U$  of $e_H\in H$ there exists a partition
of $C$ in finitely many non-empty disjoint Borel sets, say $M_j,j=1,\ldots,r$, with the property:
$$
\mbox{For\;} x,y\in M_j \mbox{\;one has\;\;} x^{-1}y\in U.
$$ 
\end{lemma}

\begin{proof} Given an open neighbourhood $U$ of $e_H\in H$, there exists a smaller open neighbourhood $V$ of $e_H$ such that $V^{-1}V\subset U$. By the definition of compactness, see \cite[p. 164]{Mu}, there exists a finite covering of  the compact set $C$ by left translates $x_jV$. 
Defining  $B_1=x_1V\cap C$ and
$$
B_j=x_jV\cap C \setminus \cup_{k=1}^{j-1} x_kV,\; j\ge 2,
$$
the non-empty sets among the $B_j$'s will form a finite partition $M_1,\ldots,M_r$ of $C$ such that each $M_j$ is contained in a left translate of $V$, and therefore we have for any $x,y\in M_j$
$$
x^{-1}y\in V^{-1}V\subset U.
$$ 
\end{proof}

The following Lemma is well-known, see \cite[p.256]{D}, but for the convenience of the reader we give a self-contained proof.
\begin{lemma}\label{thm:equi} Let $H$ be a locally compact group. For a continuous function $f: H\to \C$ the following are equivalent:
\begin{enumerate}
\item[{\rm(i)}] $f \in \mathcal P(H)$.

\item[{\rm(ii)}] $f$ is bounded and for any complex Radon measure $\mu$ on $H$ of compact support we have
\begin{equation}\label{eq:mu}
\int_{H}\int_{H} f(x^{-1}y)\,{\rm d}\mu(x)\,
{\rm d}\overline{\mu(y)}\ge 0.
\end{equation} 
\end{enumerate}
\end{lemma}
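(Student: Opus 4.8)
The plan is to prove the two implications separately, with the reverse implication (ii) $\Rightarrow$ (i) being essentially immediate and the forward implication (i) $\Rightarrow$ (ii) carrying all the work.

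For (ii) $\Rightarrow$ (i), I would simply specialize the integral inequality to finitely supported measures. Given points $x_1,\dots,x_n\in H$ and scalars $c_1,\dots,c_n\in\C$, the measure $\mu=\sum_{j=1}^n c_j\delta_{x_j}$ is a complex Radon measure of compact support, and a direct computation gives
$$
\int_H\int_H f(x^{-1}y)\,d\mu(x)\,d\overline{\mu(y)}=\sum_{j,k=1}^n f(x_j^{-1}x_k)c_j\overline{c_k}.
$$
Hypothesis (ii) forces the right-hand side to be nonnegative, which is exactly positive definiteness of the kernel $(x,y)\mapsto f(x^{-1}y)$, i.e.\ $f\in\mathcal P(H)$.

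For (i) $\Rightarrow$ (ii), boundedness is immediate from the standard estimate $|f(x)|\le f(e_H)$ for $f\in\mathcal P(H)$. To obtain the integral inequality I would approximate the double integral by positive definite finite sums. Fix a complex Radon measure $\mu$ with compact support $C$. Given $\eps>0$, I would first produce a symmetric neighbourhood $U$ of $e_H$ with the uniform-continuity property that $|f(agb)-f(g)|<\eps$ for all $g$ in the fixed compact set $C^{-1}C$ and all $a,b\in U$; then apply Lemma~\ref{thm:top} to $C$ and $U$ to obtain a Borel partition $C=\bigsqcup_{j=1}^r M_j$ with $x^{-1}y\in U$ whenever $x,y\in M_j$. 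Choosing base points $\xi_j\in M_j$ and writing $x=\xi_j u$, $y=\xi_k v$ for $x\in M_j$, $y\in M_k$ (so that $u=\xi_j^{-1}x$, $v=\xi_k^{-1}y\in U$), one has $x^{-1}y=u^{-1}(\xi_j^{-1}\xi_k)v$, whence $|f(x^{-1}y)-f(\xi_j^{-1}\xi_k)|<\eps$. Splitting the double integral over the product sets $M_j\times M_k$ and replacing $f(x^{-1}y)$ by $f(\xi_j^{-1}\xi_k)$ then yields
$$
\left|\int_H\int_H f(x^{-1}y)\,d\mu(x)\,d\overline{\mu(y)}-\sum_{j,k=1}^r f(\xi_j^{-1}\xi_k)\mu(M_j)\overline{\mu(M_k)}\right|\le \eps\,|\mu|(C)^2,
$$
where $|\mu|$ denotes the total variation. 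Each approximating sum is nonnegative, being precisely $\sum_{j,k}f(\xi_j^{-1}\xi_k)c_j\overline{c_k}$ with $c_j=\mu(M_j)$ and $f\in\mathcal P(H)$; letting $\eps\to0$ then gives the desired inequality.

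The main obstacle is the uniform-continuity step used to pass from mere continuity of $f$ to a single neighbourhood $U$ controlling all the perturbations simultaneously. Since $H$ need not be metrizable, I cannot argue with metric balls; instead I would phrase uniform continuity through neighbourhoods of $e_H$ and establish it by a tube-lemma/compactness argument applied to the continuous map $(a,g,b)\mapsto f(agb)$ near $\{e_H\}\times(C^{-1}C)\times\{e_H\}$. Working with the fixed compact set $C^{-1}C$ rather than the points $\xi_j^{-1}\xi_k$, which themselves depend on the partition, is what avoids circularity, and Lemma~\ref{thm:top} is exactly the combinatorial device that converts this uniform estimate into the finite partition needed for the approximation.
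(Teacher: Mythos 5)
Your proposal is correct and follows essentially the same route as the paper: the easy direction by specializing to finitely supported measures, and the hard direction by combining positive definiteness on finite sums with the partition from Lemma~\ref{thm:top} to approximate the double integral within $\eps\,\|\mu\|^2$ by a nonnegative quadratic form. The only cosmetic difference is that you phrase the uniform-continuity input as $|f(agb)-f(g)|<\eps$ for $g\in C^{-1}C$, $a,b\in U$ and sketch its proof by compactness, whereas the paper invokes uniform continuity of $(x,y)\mapsto f(x^{-1}y)$ on $C\times C$ directly; these are equivalent and both legitimate.
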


\begin{proof} "(i)$\implies $(ii)." Suppose first that (i) holds. As noticed, any positive definite function is bounded. For any discrete complex Radon measure of the form
$$
\sigma=\sum_{j=1}^n \alpha_j\delta_{x_j},
$$
where $x_1,\ldots x_n\in H$, $\alpha_1,\ldots, \alpha_n\in\C$, we have
\begin{eqnarray*}
\lefteqn{\int_{H}\int_{H} f(x^{-1}y)\,{\rm d}\sigma(x)\,{\rm d}\overline{\sigma(y)}} \\
&=& \sum_{k,l=1}^n f(x_k^{-1}x_l)\alpha_k\overline{\alpha_{l}}\ge 0.
\end{eqnarray*}

Let now $\mu$ denote an arbitrary complex Radon measure on $H$ with compact support $C$,
and let us consider the number
$$
I:=\int_{H}\int_{H} f(x^{-1}y)\,{\rm d}\mu(x)\,
{\rm d}\overline{\mu(y)},
$$
which is clearly real. We shall prove that $I\ge 0$, by showing that for  any $\varepsilon >0$ there
exists $J\ge0$ such that $|I-J|<\varepsilon||\mu||^2$, where $||\mu||$ is the total variation of the complex measure $\mu$, cf. \cite{R1}.

First of all
 $f(x^{-1}y)$ is uniformly continuous on the compact set $C\times C$. Thus, for given $\varepsilon>0$ there exists an
open neighbourhood $U$ of $e_H$ such that for all pairs
 $(x,y),(\tilde x,\tilde y)\in C\times C$ satisfying
$x^{-1}\tilde x\in U,\; y^{-1}\tilde y\in U$
we have
$$
 |f(x^{-1}y)-f(\tilde x^{-1}\tilde y)|<\varepsilon.
$$
Corresponding to $U$ we choose  a partition $M_j,j=1,\ldots,r$ of $C$  with the property of Lemma~\ref{thm:top}.
In particular
\begin{eqnarray*}
|f(x^{-1}y)-f(\tilde x^{- 1}\tilde y)|<\varepsilon,
\end{eqnarray*}
if 
$$
(x,y),(\tilde x,\tilde y)\in M_k\times M_l.
$$

Next
$$
I=\sum_{k,l=1}^r\int_{M_k}\int_{M_l} f(x^{-1}y)\,{\rm d}\mu(x)\,
{\rm d}\overline{\mu(y)},
$$
and if we choose an arbitrary point $x_j\in M_j$ and define $\alpha_j=\mu(M_j)$, $j=1,\ldots,r$, then 
$$
J:=\sum_{k,l=1}^r f(x_k^{-1}x_l)\alpha_k\overline{\alpha_l}\ge 0.
$$
Furthermore,
$$
I-J=\sum_{k,l=1}^r\int_{M_k}\int_{M_l} [f(x^{-1}y)
-f(x_k^{-1}x_l)]
\,{\rm d}\mu(x)\,
{\rm d}\overline{\mu(y)},
$$
and  by the uniform continuity
$$
|I-J|\le \varepsilon\sum_{k,l=1}^r |\mu|(M_k)|\mu|(M_l)= \varepsilon ||\mu||^2,
$$
where $|\mu|$ denotes the total variation measure. 

\medskip  

The implication  "(ii)$\implies$(i)" is easy by specializing the  measure $\mu$ to a complex discrete
measure concentrated in finitely many points. 
\end{proof}

{\it Proof of Theorem~\ref{thm:main}.} Suppose that $f$ belongs to $\mathcal P_K^\sharp(G,L)$ and let us consider the product measure $\mu:=\omega_G\otimes \sigma$ on $G\times L$,
where $\sigma$ is an arbitrary complex Radon measure on $L$ of compact support. By Lemma~\ref{thm:equi} for the locally compact group $H=G\times L$ applied to $\mu$ we get
\begin{equation}\label{eq:help1}
\int_G\int_G\int_L \int_L f(x^{-1}y,u^{-1}v)\,{\rm d}\omega_{G}(x)\,{\rm d}\omega_{G}(y)\,{\rm d}\sigma(u)\,
{\rm d}\overline{\sigma(v)}\ge 0.
\end{equation}
The  integral with respect to $x,y$ can be simplified to
$$
\int_{G}\int_{G} f(x^{-1}y,u^{-1}v)\,{\rm d}\omega_{G}(x)\,{\rm d}\omega_{G}(y)=\int_{G}
f(y,u^{-1}v)\,{\rm d}\omega_G(y)
$$
by invariance of $\omega_G$.
Therefore Equation \eqref{eq:help1}
amounts  to
\begin{equation}\label{eq:help2}
\int_{G}\int_L \int_L f(y,u^{-1}v)\,{\rm d}\omega_G(y)\,{\rm d}\sigma(u)\,
{\rm d}\overline{\sigma(v)}\ge 0.
\end{equation}
 
For arbitrary $\varphi\in Z$ we next apply Equation \eqref{eq:help2} to the function $f(x,u)\overline{\varphi(x)}$, which belongs to $\mathcal P_K^\sharp(G,L)$
by Proposition~\ref{thm:elem}.
This gives
\begin{equation}\label{eq:help3}
\int_{G}\int_L \int_L f(y,u^{-1}v)\overline{\varphi(y)}
\,{\rm d}\omega_G(y)\,{\rm d}\sigma(u)\,
{\rm d}\overline{\sigma(v)}\ge 0.
\end{equation}
The function $B(\varphi):L\to\mathbb C$ defined by
\begin{equation}\label{eq:help4}
B(\varphi)(u):= \delta(\varphi)\int_{G} f(y,u)\overline{\varphi(y)}\,{\rm d}\omega_G(y)=
\delta(\varphi)\widehat{f(\cdot,u)}(\varphi)
\end{equation}
is clearly continuous and bounded, and it is positive definite on $L$, because Equation \eqref{eq:help3} holds for all complex Radon measures $\sigma$ on $L$ with compact support. 

For each $u\in L$ the function $f(\cdot,u)$ belongs to $C_K^\sharp (G)\subset L^2_K(G)^\sharp$ and has  the orthogonal expansion in spherical functions
\begin{equation}\label{eq:L}
f(y,u)=\sum_{\varphi\in Z} B(\varphi)(u)\varphi(y), 
\end{equation}
where the convergence is in $L^2(G)$ with respect to $y\in G$ by Theorem~\ref{thm:compBG}.

When $u=e_L$, then $f(\cdot,e_L)\in\mathcal P_K^\sharp(G)$, and by the Bochner-Godement Theorem we have
$$
f(y,e_L)=\sum_{\varphi\in Z} B(\varphi)(e_L)\varphi(y),\quad y\in G. 
$$
Since $B(\varphi)(e_L)\ge 0$ with $\sum_{\varphi\in Z} B(\varphi)(e_L)<\infty$ and
$$
|B(\varphi)(u)\varphi(y)|\le B(\varphi)(e_L),
$$
the M-test of Weierstrass shows that the series on the right-hand side of \eqref{eq:L} 
converges uniformly to a continuous function $\tilde f(y,u)$ on $G\times L$.
In particular, for each $u\in L$ the series \eqref{eq:L} converges uniformly in $y\in G$ to $\tilde f(y,u)$, but this implies  convergence in $L^2(G)$ and therefore $f(y,u)=\tilde f(y,u)$ for almost all $y\in G$. Since these functions are continuous, we have equality  for all $y\in G$ .
\hfill $\quad\square$   

\medskip

\section{The sphere $\S^d$ in $\R^{d+1}$}

For $n\ge 1$ let $O(n)$  denote the group of orthogonal $n\times n$
real matrices $A$ and let $SO(n)$ denote the normal subgroup of those matrices $A\in  O(n)$ with determinant 1. In the case $n=1$ these  groups are the simple multiplicative groups $\{\pm1\}$ and $\{1\}$.  

The groups $O(d+1)$ and $SO(d+1)$ both operate  on the  real unit sphere of dimension $d$
$$
\S^{d}=\{x\in\R^{d+1} \mid ||x||^2=\sum_{k=1}^{d+1} x_k^2=1\},
$$
but while $O(d+1)$ operates transitively, this is the case for $SO(d+1)$ only for $d\ge 1$.

The surface measure of the sphere $\S^d$ is denoted $\omega_d$ and it is of total mass
$$
\sigma_d=\omega_d(\S^d)=\frac{2\pi^{(d+1)/2}}{\Gamma((d+1)/2)}.
$$

We use the notation $e_1,\ldots,e_{d+1}$ for the standard basis in $\R^{d+1}$.
The fixed-point group of the matrices $A\in O(d+1)$  satisfying $Ae_{1}=e_{1}$,
is of the form
$$
A=\begin{pmatrix}
1 & 0 \\
0 & \tilde A
\end{pmatrix}, 
$$
where $\tilde A\in O(d)$, the zero in the upper right corner represents a zero row vector of length $d$, and the zero in the lower left corner represents a zero column vector of length $d$.

Let $G$ denote the compact group $O(d+1)$. This shows that the fixed-point group $K$  of $e_{1}$ is isomorphic to $O(d)$ and in the following identified with $O(d)$. The mapping $A\mapsto Ae_1$ of
$G=O(d+1)$ onto $\S^d$ is constant on the left cosets $\xi=AK$, and hence induces a bijection of $G/K$ onto $\S^d$, and it is a homeomorphism. 
The pair $(G,K)$ is known to be a compact Gelfand pair, cf. \cite{vD},\cite{W}.

The mapping $A\mapsto Ae_1\cdot e_1$ of $G$ onto $[-1,1]$ is constant on the double cosets and if $Ae_1\cdot e_1=Be_1\cdot e_1$ for $A,B\in G$, then they belong to the same double coset. Therefore the space of double cosets $K\backslash G/K$ is homeomorphic to $[-1,1]$. This shows that complex functions on $G$ which are bi-invariant with respect to $K$, can be identified with functions  $f:[-1,1]\to\C$. In fact, for such a function, $A\mapsto f(Ae_{1}\cdot e_{1})$ is a bi-invariant function on $G$ and all bi-invariant functions on $G$ have this form. The bi-invariant functions depend only on the upper left corner $a_{11}$ of $A\in O(d+1)$.

The image measure of Haar measure $\omega_G$ on $G=O(d+1)$ under the mapping $A\mapsto Ae_1$ of $G$ onto $\S^d$ is the normalized surface measure $\omega_d/\sigma_d$. The image measure of $\omega_d/\sigma_d$ under the mapping $\xi\mapsto \xi\cdot e_1$ of $\S^d$ onto $[-1,1]$ is the probability measure on $[-1,1]$ with density 
$$
(\sigma_{d-1}/\sigma_d)\left(1-x^2\right)^{d/2-1}
$$
with respect to Lebesgue measure, cf. \cite{M}.

The spherical functions are precisely the normalized ultraspherical polynomials $c_n(d,x)$ given by \eqref{eq:Geg}. As bi-invariant functions on $O(d+1)$ they are  positive definite by Schoenberg's Theorem in \cite{S}.   

Therefore the dual space $Z$ of the compact Gelfand pair $(O(d+1),O(d))$ can be identified with $\N_0$ and we have
$$  
\d(c_n(d,x))=N_n(d)=\frac{(d)_{n-1}}{n!}(2n+d-1),\; n\ge 1,\quad \d(c_0(d,x))=N_0(d)=1.
$$
For $f\in L^1_K(G)^\sharp$ considered as a function on $[-1,1]$ we have
$$
\widehat{f}(n)=(\sigma_{d-1}/\sigma_d)\int_{-1}^1 f(x)c_n(d,x)(1-x^2)^{d/2-1}\,dx,\quad n\in\N_0.
$$
The space $\mathcal P_K^\sharp(G)$ can be identified with the space $\mathcal P(\S^d)$ from
\cite{B:P}, and for an arbitrary locally compact group $L$ the space $\mathcal P_K^\sharp(G,L)$
can be identified with $\mathcal P(\S^d,L)$. Theorem 3.3 of \cite{B:P} is a  special case of 
Theorem~\ref{thm:main}, and it can be  formulated  as follows:

\begin{thm}\label{thm:realsphere} Let $d\in\N$, let  $L$ be a locally compact group  and let $f:[-1,1]\times L\to \mathbb C$ be a continuous function. Then
$A\mapsto f(Ae_1\cdot e_1,u)$ belongs to $\mathcal P^\sharp_{O(d)}(O(d+1),L)$ if and only if there exists
 a sequence of functions $(\varphi_{n,d})_{n\ge 0}$ from $\mathcal P(L) $ with $\sum \varphi_{n,d}(e_L)<\infty$ 
such that
\begin{equation}\label{eq:expandr}
f(x,u)=\sum_{n=0}^\infty \varphi_{n,d}(u) c_n(d,x),\quad x\in[-1,1],\;u\in L.
\end{equation}
The above expansion is uniformly convergent for $(x,u)\in [-1,1]\times L$, and we have
\begin{equation}\label{eq:coefrs}
\varphi_{n,d}(u)=N_n(d)(\sigma_{d-1}/\sigma_d) \int_{-1}^1 f(x,u)c_n(d,x)(1-x^2)^{d/2-1}\,{\rm d}x.
\end{equation}
\end{thm}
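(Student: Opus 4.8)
The plan is to obtain Theorem~\ref{thm:realsphere} as the concrete specialization of Theorem~\ref{thm:main} to the compact Gelfand pair $(O(d+1),O(d))$. All the structural identifications needed have already been assembled in the discussion preceding the statement, so the argument reduces to matching each ingredient of the general theorem with its ultraspherical counterpart and then translating the coefficient formula by a change of variables.

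First I would use the homeomorphism of the double coset space $K\backslash G/K$ onto $[-1,1]$ induced by $A\mapsto Ae_1\cdot e_1$ to identify a continuous function $f:[-1,1]\times L\to\C$ with the function $\tilde f(A,u)=f(Ae_1\cdot e_1,u)$ on $G\times L$, where $G=O(d+1)$ and $K=O(d)$. Since $f$ depends on $A$ only through the double coset, $\tilde f$ is automatically bi-invariant in the $G$-variable, and $\tilde f\in\mathcal P^\sharp_K(G,L)$ holds exactly when $f$ satisfies the hypothesis of the theorem. In this way the two membership statements correspond.

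Next I would record the identification of the dual space: the positive definite spherical functions for $(O(d+1),O(d))$ are precisely the normalized ultraspherical polynomials $c_n(d,\cdot)$, $n\in\N_0$, regarded as bi-invariant functions $A\mapsto c_n(d,Ae_1\cdot e_1)$, their positive definiteness being Schoenberg's theorem. Hence $Z$ is identified with $\N_0$, and the dimension attached to $c_n(d,\cdot)$ is $\delta(c_n(d,\cdot))=N_n(d)$. Writing $\varphi_{n,d}(u):=B(c_n(d,\cdot))(u)$, the expansion \eqref{eq:expand} of Theorem~\ref{thm:main} becomes \eqref{eq:expandr}, the summability condition $\sum_{\varphi\in Z}B(\varphi)(e_L)<\infty$ becomes $\sum_n\varphi_{n,d}(e_L)<\infty$, and the uniform convergence on $[-1,1]\times L$ is inherited verbatim.

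The only computational point is the coefficient formula. Theorem~\ref{thm:main} gives $B(\varphi)(u)=\delta(\varphi)\int_G \tilde f(A,u)\overline{\varphi(A)}\,d\omega_G(A)$. To rewrite this as an integral over $[-1,1]$ I would push $\omega_G$ forward through the two maps $G\to\S^d\to[-1,1]$; as recalled in this section, the resulting image measure is the probability measure with density $(\sigma_{d-1}/\sigma_d)(1-x^2)^{d/2-1}$, and since $c_n(d,x)$ is real-valued the complex conjugate disappears. Substituting $\delta(c_n(d,\cdot))=N_n(d)$ then yields exactly \eqref{eq:coefrs}. This change of variables---the classical computation of the pushforward weight---is the single step carrying any content; everything else is bookkeeping that transports Theorem~\ref{thm:main} directly to the sphere.
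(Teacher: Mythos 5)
Your proposal is correct and is essentially the paper's own argument: the paper likewise obtains Theorem~\ref{thm:realsphere} as the direct specialization of Theorem~\ref{thm:main} to the Gelfand pair $(O(d+1),O(d))$, using the identification of $K\backslash G/K$ with $[-1,1]$, of $Z$ with $\N_0$ via the polynomials $c_n(d,\cdot)$ (positive definite by Schoenberg's theorem) with $\d(c_n(d,\cdot))=N_n(d)$, and the pushforward of $\omega_G$ to the measure $(\sigma_{d-1}/\sigma_d)(1-x^2)^{d/2-1}\,{\rm d}x$ to convert the coefficient formula. Nothing is missing; your write-up matches the paper's route step for step.
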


\begin{rem}{\rm There is an apparent discrepancy between the symbol $\mathcal P(\S^d,L)$, which denotes a set of functions on $[-1,1]\times L$, and  $\mathcal P_K^\sharp(G,L)$, which denotes a set of functions on $G\times L$, but since these functions are bi-invariant with respect to $K$ in the $G$-variable, these functions can be considered as functions on $(K\backslash G/K)\times L$.  The double coset space $K\backslash G/K$ is homeomorphic to $[-1,1]$ in case of the Gelfand pair 
$(O(d+1),O(d))$, and therefore we think that the notation is reasonable.
}
\end{rem}

In all the above one could also have considered the compact Gelfand pair $(SO(d+1),SO(d))$ when
$d\ge 2$. For $d=1$ there is no need for Gelfand pairs since $SO(1)$ is trivial and  $SO(2)$ is commutative and can be identified with $\S^1$.

\section{The complex sphere $\Omega_{2q}$ in $\C^q$}

In the following we use the notation from \cite{M:P}.

For $q\ge 1$ let $U(q)$  denote the group of unitary $q\times q$
complex matrices $A$ and let $SU(q)$ denote the normal subgroup of those matrices $A\in  U(q)$ with determinant 1. In the case $q=1$ these  groups are the  multiplicative groups $\T=\{z\in\C \mid |z|=1\}$ and $\{1\}$. Since these are commutative we will always assume $q\ge 2$.  

The groups $U(q)$ and $SU(q)$ both operate  on the complex unit sphere of (real) dimension $2q-1$
$$
\Omega_{2q}=\{z\in\C^{q} \mid ||z||^2=\sum_{k=1}^{q} |z_k|^2=1\},
$$
but while $U(q)$ operates transitively, this is the case for $SU(q)$ only for $q\ge 2$.

Note that  $\Omega_{2q}$ is equal to $\S^{2q-1}$ if $\C^q$ is identified with $\R^{2q}$.

We use the notation $e_1,\ldots,e_{q}$ for the standard basis in $\C^{q}$.
The fixed-point group of the matrices $A\in U(q)$  satisfying $Ae_{1}=e_{1}$,
is of the form
$$
A=\begin{pmatrix}
1 & 0 \\
0 & \tilde A
\end{pmatrix}, 
$$
where $\tilde A\in U(q-1)$, the zero in the upper right corner represents a zero row vector of length $q-1$, and the zero in the lower left corner represents a zero column vector of length $q-1$.

Let $G$ denote the compact group $U(q)$. This shows that the fixed-point group $K$ of $e_1$ is isomorphic to $U(q-1)$ and in the following identified with $U(q-1)$. The mapping $A\mapsto Ae_1$ of $G=U(q)$ onto $\Omega_{2q}$ is constant on the left cosets $\xi=AK$ and hence induces a bijection of $G/K$ onto $\Omega_{2q}$. This bijection is a homeomorphism.
The pair $(G,K)$ is known to be a compact Gelfand pair, cf. \cite{vD},\cite{W}.

The bi-invariant functions on $G$ with respect to $K$ can be identified with functions $f:\overline{\D}\to\C$, where $\D=\{z\in\C \mid |z|<1\}$ is the open unit disc and $\overline{\D}$
its closure. To see this notice that the mapping $A\mapsto Ae_1\cdot e_1$ of $G$ onto $\overline{\D}$ is constant on the double cosets and if $Ae_1\cdot e_1=Be_1\cdot e_1$ for $A,B\in G$, then they belong to the same double coset. Therefore the space of double cosets $K\backslash G/K$ is homeomorphic to $\overline{\D}$.

 The bi-invariant functions depend only on the upper left corner $a_{11}$ of $A\in U(q)$.

The image measure of Haar measure $\omega_G$ on $G=U(q)$ under the mapping $A\mapsto Ae_1$ of $G$ onto $\Omega_{2q}$ is the normalized surface measure $\omega_{2q-1}/\sigma_{2q-1}$ on $\Omega_{2q}$. The image measure of $\omega_{2q-1}/\sigma_{2q-1}$ under the mapping $\xi\mapsto \xi\cdot e_1$ of $\Omega_{2q}$ onto $\overline{\D}$ is the probability measure on $\overline{\D}$ given in polar coordinates $z=re^{i\varphi}, 0\le r\le 1,0\le\varphi<2\pi$ as 
$$
\frac{q-1}{\pi} r(1-r^2)^{q-2}dr d\varphi,
$$
cf. formula (2.18) of \cite{K2}.

The spherical functions are precisely the functions $R^{q-2}_{m,n}(z)$ given in \cite{K2},\cite{M:P}, and as bi-invariant functions on $U(q)$ they are  positive definite. 
The functions    $R^{q-2}_{m,n}(z)$ belong to the class of disc polynomials given in \cite{K2} for $\a>-1$ as
\begin{eqnarray*}\label{eq:discpol}
R^\a_{m,n}(r e^{i\varphi})=r^{|m-n|} e^{i(m-n)\varphi}R^{(\a,|m-n|)}_{\min(m,n)}(2r^2-1),\quad 0\le r \le 1,\;0\le \varphi<2\pi
\end{eqnarray*}
and 
$$
R^{(\a,\b)}_k(x)=P^{(\a,\b)}_k(x)/P^{(\a,\b)}_k(1),\quad \a,\b>-1,\;k\in \N_0
$$
are normalized Jacobi polynomials, cf. \cite{A:A:R}.

See \cite{Wu} for other expressions for the disc polynomials.
 
The dual space $Z$ of the compact Gelfand pair $(U(q),U(q-1))$ can be identified with $\N_0^2$ and we have
$$  
\d(R^{q-2}_{m,n}(z))=N(q;m,n)=\frac{m+n+q-1}{q-1}\binom{m+q-2}{q-2}\binom{n+q-2}{q-2}.
$$
For $f\in L^1_K(G)^\sharp$ considered as a function on $\overline{\D}$ we have
$$
\widehat f(m,n)=\frac{q-1}{\pi}\int_0^1\int_0^{2\pi} f(re^{i\varphi})\overline{R^{q-2}_{m,n}(re^{i\varphi})}
r(1-r^2)^{q-2}d\varphi dr.
$$

The space $\mathcal P_K^\sharp(G)$ can be identified with the functions characterized in
Theorem 4.2 in \cite{M:P}, and for an arbitrary locally compact group $L$ the space $\mathcal P_K^\sharp(G,L)$
can be characterized as follows by Theorem~\ref{thm:main}:

\begin{thm}\label{thm:complexsphere} Let $q\in\N, q\ge 2$, let  $L$ be a locally compact group  and let $f:\overline{\D}\times L\to \mathbb C$ be a continuous function. Then
$A\mapsto f(Ae_1\cdot e_1,u)$ belongs to $\mathcal P^\sharp_{U(q-1)}(U(q),L)$ if and only if there exists a double sequence of functions $(\varphi_{m,n}^{q-2})_{m,n\ge 0}$ from $\mathcal P(L) $ with 
$$
\sum_{m,n\ge 0} \varphi_{m,n}^{q-2}(e_L)<\infty
$$ 
such that
\begin{equation}\label{eq:expandcp}
f(z,u)=\sum_{m,n=0}^\infty \varphi_{m,n}^{q-2}(u) R^{q-2}_{m,n}(z),\quad z\in\overline{\D},\;u\in L.
\end{equation}
The above expansion is uniformly convergent on $\overline{\D}\times L$, and we have
\begin{equation}\label{eq:coefcp}
\varphi_{m,n}^{q-2}(u)=N(q;m,n)\frac{q-1}{\pi}\int_{0}^1\int_0^{2\pi}  f(re^{i\varphi},u)\overline{R^{q-2}_{m,n}(re^{i\varphi})}r(1-r^2)^{q-2}\,dr \,d\varphi.
\end{equation}
\end{thm}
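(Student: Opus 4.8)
The plan is to obtain Theorem~\ref{thm:complexsphere} as a direct specialization of the abstract Theorem~\ref{thm:main} applied to the compact Gelfand pair $(G,K)=(U(q),U(q-1))$, translating each abstract object into the concrete language of the disc $\overline{\D}$ and the disc polynomials. First I would fix the dictionary established earlier in this section: the double coset space $K\backslash G/K$ is homeomorphic to $\overline{\D}$ via $A\mapsto Ae_1\cdot e_1$, so a continuous $f:\overline{\D}\times L\to\C$ corresponds to the continuous function $(A,u)\mapsto f(Ae_1\cdot e_1,u)$ on $G\times L$ which is bi-invariant in the $G$-variable, and by definition the latter lies in $\mathcal P_K^\sharp(G,L)$ precisely under the hypothesis of the theorem. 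Likewise the dual space $Z$ is identified with $\N_0^2$ through the positive definite spherical functions $R^{q-2}_{m,n}$, so the abstract coefficient function $B:Z\to\mathcal P(L)$ becomes the double sequence $\varphi_{m,n}^{q-2}:=B(R^{q-2}_{m,n})$ in $\mathcal P(L)$.

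With this dictionary in place, the expansion in Theorem~\ref{thm:main} becomes \eqref{eq:expandcp}, the summability condition $\sum_{\varphi\in Z}B(\varphi)(e_L)<\infty$ becomes $\sum_{m,n\ge 0}\varphi_{m,n}^{q-2}(e_L)<\infty$, and the uniform convergence statement transfers verbatim since $\overline{\D}$ is the homeomorphic image of $K\backslash G/K$. It then remains to convert the abstract coefficient formula $B(\varphi)(u)=\d(\varphi)\int_G f(x,u)\overline{\varphi(x)}\,d\omega_G(x)$ into \eqref{eq:coefcp}. For this I would substitute $\d(R^{q-2}_{m,n})=N(q;m,n)$ and push the Haar integral over $G$ forward to $\overline{\D}$: since both $f(\cdot,u)$ and $R^{q-2}_{m,n}$ are bi-invariant, hence functions of $Ae_1\cdot e_1$ alone, the integrand is constant on double cosets and the integral reduces to one over $\overline{\D}$ against the image measure of $\omega_G$ under $A\mapsto Ae_1\cdot e_1$. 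By the two image-measure computations recorded above that measure is $\frac{q-1}{\pi}r(1-r^2)^{q-2}\,dr\,d\varphi$ in polar coordinates, which reproduces exactly \eqref{eq:coefcp}.

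The only point requiring genuine care—rather than routine bookkeeping—is this last image-measure step, namely that integrating a bi-invariant function over $(G,\omega_G)$ equals integrating its restriction to $\overline{\D}$ against $\frac{q-1}{\pi}r(1-r^2)^{q-2}\,dr\,d\varphi$; but this is precisely the content of the composed image-measure identities already established (the image of $\omega_G$ on $\Omega_{2q}$ is $\omega_{2q-1}/\sigma_{2q-1}$, whose image on $\overline{\D}$ is the stated density). Once these are invoked the proof is complete, since the positive definiteness of each $\varphi_{m,n}^{q-2}$, the convergence, and the converse implication are all inherited directly from Theorem~\ref{thm:main}.
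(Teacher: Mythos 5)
Your proposal is correct and matches the paper's own treatment: the paper proves Theorem~\ref{thm:complexsphere} precisely by specializing Theorem~\ref{thm:main} to the Gelfand pair $(U(q),U(q-1))$, using the same dictionary (double coset space homeomorphic to $\overline{\D}$, dual space $Z\cong\N_0^2$ via the disc polynomials $R^{q-2}_{m,n}$, $\d(R^{q-2}_{m,n})=N(q;m,n)$, and the pushforward of Haar measure to the density $\frac{q-1}{\pi}r(1-r^2)^{q-2}\,dr\,d\varphi$) that you spell out. Your explicit attention to the image-measure step converting the integral over $G$ into \eqref{eq:coefcp} is exactly the content the paper records before stating the theorem.
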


In  the above one could  have considered the compact Gelfand pair $(SU(q),SU(q-1))$ when
$q\ge 3$. For $q=2$ this is not a Gelfand pair since $SU(2)$ is a non-abelian group and
  $SU(1)$ is trivial. Furthermore,
$$
(\xi,\eta)\mapsto \begin{pmatrix}
\xi & -\overline{\eta} \\
\eta & \overline{\xi}
\end{pmatrix} 
$$
is a homeomorphism of $\Omega_4$ onto $SU(2)$, and therefore
$\Omega_4$ can be given a group structure so it is isomorphic with $SU(2)$.

\section{Products of Gelfand pairs}

Let $(G_1,K_1)$ and $(G_2,K_2)$ be two Gelfand pairs with dual  spaces $Z_1$ and $Z_2$. Then
$(G_1\times G_2,K_1\times K_2)$ is a Gelfand pair and the dual space $Z$ can be identified with the product space $Z_1\times Z_2$.

In fact, if $\varphi_1\in Z_1, \varphi_2\in Z_2$, then $\varphi_1\otimes\varphi_2:G_1\times G_2\to \C$ defined by $\varphi_1\otimes\varphi_2(x_1,x_2)=\varphi_1(x_1)\varphi_2(x_2)$ is easily seen to belong to $Z$. Furthermore, it is not difficult to see that $(\varphi_1,\varphi_2)\mapsto \varphi_1\otimes\varphi_2$ is a homeomorphism of $Z_1\times Z_2$ onto $Z$.

If we consider the two compact Gelfand pairs $(O(d+1),O(d))$ and  $(O(d'+1),O(d'))$ and apply 
Theorem~\ref{thm:compBG} (ii) to their product, we get Theorem 2.9 of \cite{G:M:P}. Another proof of this theorem was given in \cite[Theorem 6.1]{B:P}.

If we consider the two compact Gelfand pairs $(U(q),U(q-1))$ and $(U(p),U(p-1))$ with $q,p\ge 2$ and apply Theorem~\ref{thm:compBG} (ii) to their product, we get the following result:

\begin{thm}\label{thm:prodcs} Let $f:\overline{\D}\times\overline{\D}\to \C$ be a continuous function and define $F:U(q)\times U(p)\to\C$ by 
$$
F(A,B)=f(Ae_1\cdot e_1,Be_1\cdot e_1),\quad A\in U(q), B\in U(p).
$$
Then $F\in\mathcal P^\sharp_{U(q-1)\times U(p-1)}(U(q)\times U(p))$ if and only if there exists a multi-sequence $c:\N_0^4\to [0,\infty[$ with $\sum c(m,n,k,l)<\infty$ such that
\begin{equation}\label{eq:BGprodcs}
f(z,w)=\sum_{(m,n,k,l)\in\N_0^4} c(m,n,k,l)R^{q-2}_{m,n}(z)R^{p-2}_{k,l}(w),\quad z,w\in\overline{\D}.
\end{equation}
The series \eqref{eq:BGprodcs} converges uniformly on $\overline{\D}^2$.
\end{thm}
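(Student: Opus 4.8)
The plan is to recognize Theorem~\ref{thm:prodcs} as nothing more than the compact Bochner--Godement theorem, namely Theorem~\ref{thm:compBG}(ii), applied to the product Gelfand pair $(G,K)=(U(q)\times U(p),\,U(q-1)\times U(p-1))$, exactly as the text preceding the statement announces. The bulk of the work is therefore not an estimate but a careful matching of the product data against the general theory, after which the conclusion is immediate.

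First I would assemble the identifications for the product pair. By the opening remarks of this section, $(G,K)$ is again a compact Gelfand pair, its dual space $Z$ is the product $Z_1\times Z_2$, and each spherical function on $G$ is a tensor product $\varphi_1\otimes\varphi_2$ with $\varphi_i$ spherical for the corresponding factor. From Section 6 the dual spaces of $(U(q),U(q-1))$ and $(U(p),U(p-1))$ are identified with $\N_0^2$, with spherical functions $R^{q-2}_{m,n}$ and $R^{p-2}_{k,l}$ respectively, so $Z$ is identified with $\N_0^4$ and the spherical function indexed by $(m,n,k,l)$ is $(z,w)\mapsto R^{q-2}_{m,n}(z)R^{p-2}_{k,l}(w)$ once we pass to the double coset picture. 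I would then record that the double coset space of the product factors as $K\backslash G/K\cong (K_1\backslash G_1/K_1)\times(K_2\backslash G_2/K_2)$, since $(A,B)$ and $(A',B')$ lie in the same double coset exactly when $A,A'$ do in $U(q)$ and $B,B'$ do in $U(p)$; by Section 6 this space is homeomorphic to $\overline{\D}\times\overline{\D}$. Consequently a continuous bi-invariant function on $G$ is precisely a continuous function on $\overline{\D}^2$, and $F(A,B)=f(Ae_1\cdot e_1,Be_1\cdot e_1)$ is exactly the bi-invariant function on $G$ corresponding to $f$, so $F\in C_K^\sharp(G)$.

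With these dictionaries in place I would simply invoke Theorem~\ref{thm:compBG}(ii): $F$ lies in $\mathcal P_K^\sharp(G)=\mathcal P^\sharp_{U(q-1)\times U(p-1)}(U(q)\times U(p))$ if and only if there is a family $(B(\varphi))_{\varphi\in Z}$ of non-negative numbers with $\sum_{\varphi\in Z}B(\varphi)<\infty$ and $F=\sum_{\varphi\in Z}B(\varphi)\varphi$, the series converging uniformly on $G$. Setting $c(m,n,k,l):=B(R^{q-2}_{m,n}\otimes R^{p-2}_{k,l})$ and reading the expansion through the identification $K\backslash G/K\cong\overline{\D}^2$ gives precisely \eqref{eq:BGprodcs}; uniform convergence on $G$ descends to uniform convergence on $\overline{\D}^2$ because the double coset quotient map is a continuous surjection onto $\overline{\D}^2$. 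The only point requiring genuine attention---and thus the closest thing to an obstacle---is the verification that the spherical functions of the product are exactly the tensor products $R^{q-2}_{m,n}\otimes R^{p-2}_{k,l}$ and that these, viewed as bi-invariant functions, correspond to the claimed functions on $\overline{\D}^2$; once the product structure of $Z$ and of the double coset space is granted this is routine, and no summability or convergence estimate beyond what Theorem~\ref{thm:compBG}(ii) already supplies is needed.
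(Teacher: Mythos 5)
Your proposal is correct and follows exactly the paper's route: the paper proves Theorem~\ref{thm:prodcs} precisely by applying Theorem~\ref{thm:compBG}(ii) to the product Gelfand pair $(U(q)\times U(p),\,U(q-1)\times U(p-1))$, using the identification of the dual space of a product pair with $Z_1\times Z_2$ via tensor products of spherical functions. Your write-up merely makes explicit the identifications (double coset space $\cong\overline{\D}^2$, spherical functions $R^{q-2}_{m,n}\otimes R^{p-2}_{k,l}$, descent of uniform convergence) that the paper leaves implicit.
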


\section{An extension from $\mathcal P(L)$ to $\mathcal P(X^2)$}

In a recent paper \cite{G:M} Guella and Menegatto showed how the expansions of functions in
$\mathcal P(\S^d,L)$ with coefficient functions from $\mathcal P(L)$ can be extended to expansions with positive definite kernels on an arbitrary set as coefficient functions. We shall here show how this can be modified to the present framework, where the sphere is replaced by an arbitrary homogeneous space $G/K$ associated with a compact Gelfand pair $(G,K)$.

Let $X$ denote an arbitrary non-empty set and let $\mathcal P(X^2)$ denote the set of kernels
$B:X^2\to\C$ which are positive definite. 

To a function $f:G\times X^2\to \C$ we consider the kernel on
$G\times X$ given by
\begin{equation}\label{eq:kern}
 ((x,u),(y,v))\mapsto f(x^{-1}y,u,v),\quad x,y\in G,\;u,v\in X.
\end{equation}

\begin{defn}\label{thm:ext}
 By $\mathcal P^\sharp_K(G,X^2)$ we shall denote the set of functions $f:G\times X^2\to \C$  satisfying
\begin{enumerate}
\item $f$ is bi-invariant with respect to $K$ in the first variable 
\item $f(\cdot,u,v)$ is continuous on $G$ for each $(u,v)\in X^2$
\item The kernel \eqref{eq:kern} is positive definite.
\end{enumerate}
\end{defn}

Theorem 2.3 in \cite{G:M} can be extended in the following way:

\begin{thm}\label{thm:GM} Let $(G,K)$ be a compact Gelfand pair.
For a function $f:G\times X^2\to \C$  the following are equivalent:
\begin{enumerate}
\item[(i)] $f\in \mathcal P^\sharp_K(G,X^2)$
\item[(ii)] $f$ has a series representation of the form
\begin{equation}\label{eq:expandgen}
f(x,u,v)=\sum_{\varphi\in Z} B(\varphi)(u,v)\varphi(x),\quad x\in G, \;(u,v)\in X^2,
\end{equation}
where $B(\varphi)\in\mathcal P(X^2)$ and $\sum_{\varphi\in Z} B(\varphi)(u,u)<\infty$ for each $u\in X$.
\end{enumerate}
For $f\in \mathcal P^\sharp_K(G,X^2)$ we have
\begin{equation}\label{eq:coef}
B(\varphi)(u,v)=\d(\varphi)\int_G f(x,u,v)\overline{\varphi(x)}\,{\rm d}\omega_G(x),
 \quad (u,v)\in X^2.
\end{equation}
\end{thm}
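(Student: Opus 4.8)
The plan is to mirror the proof of Theorem~\ref{thm:main}, replacing the group $L$ and its positive definite functions by the set $X$ and positive definite kernels, while exploiting that positive definiteness in Definition~\ref{thm:ext} is now phrased directly in terms of finite point sets rather than through the measure-theoretic criterion of Lemma~\ref{thm:equi}. I would treat the two implications separately, setting throughout $B(\varphi)(u,v)=\delta(\varphi)\int_G f(x,u,v)\overline{\varphi(x)}\,d\omega_G(x)$.

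For the implication (i)$\implies$(ii) the key step — and essentially the only nontrivial one — is to show that each $B(\varphi)$ is a positive definite kernel on $X$. Fix $\varphi\in Z$, finitely many points $u_1,\dots,u_n\in X$ and scalars $c_1,\dots,c_n\in\C$, and form $F(x)=\sum_{j,k}c_j\overline{c_k}f(x,u_j,u_k)$ on $G$. This $F$ is continuous and bi-invariant by conditions (1) and (2) of Definition~\ref{thm:ext}, and it is positive definite on $G$: applying the positive definiteness of the kernel \eqref{eq:kern} to the points $(x_p,u_j)\in G\times X$ with weights $a_p c_j$ regroups exactly to $\sum_{p,q}a_p\overline{a_q}F(x_p^{-1}x_q)\ge 0$ for all $x_p\in G$, $a_p\in\C$. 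Hence $F\in\mathcal P_K^\sharp(G)$, and Theorem~\ref{thm:compBG}(ii) gives $F=\sum_{\psi\in Z}\beta(\psi)\psi$ with $\beta(\psi)=\delta(\psi)\widehat F(\psi)\ge 0$. By linearity of the Fourier transform $\beta(\varphi)=\sum_{j,k}c_j\overline{c_k}B(\varphi)(u_j,u_k)$, so this sum is nonnegative and $B(\varphi)\in\mathcal P(X^2)$. Specializing to $n=1$, $u_1=u$ shows $f(\cdot,u,u)\in\mathcal P_K^\sharp(G)$ with expansion coefficients $B(\varphi)(u,u)$, whence $\sum_{\varphi\in Z}B(\varphi)(u,u)<\infty$, again by Theorem~\ref{thm:compBG}(ii).

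It then remains to establish the expansion \eqref{eq:expandgen} and upgrade its convergence. For fixed $(u,v)$ the function $f(\cdot,u,v)$ lies in $C_K^\sharp(G)\subset L^2_K(G)^\sharp$, so by Theorem~\ref{thm:compBG}(i) its orthogonal expansion is $\sum_{\varphi}B(\varphi)(u,v)\varphi$, convergent in $L^2(G)$. To obtain pointwise equality I would invoke the Cauchy--Schwarz inequality for positive definite kernels, $|B(\varphi)(u,v)|\le\sqrt{B(\varphi)(u,u)B(\varphi)(v,v)}\le\tfrac{1}{2}\big(B(\varphi)(u,u)+B(\varphi)(v,v)\big)$, together with $|\varphi(x)|\le 1$; the summability obtained above makes Weierstrass' M-test apply, so the series converges uniformly in $x$ to a continuous function, which must agree with the $L^2$-limit $f(\cdot,u,v)$ everywhere. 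This is precisely the continuity-plus-$L^2$ argument used at the end of the proof of Theorem~\ref{thm:main}, and orthogonality \eqref{eq:OS} simultaneously confirms the coefficient formula \eqref{eq:coef}.

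For the converse (ii)$\implies$(i), conditions (1) and (2) follow immediately: each $\varphi$ is bi-invariant, and the same M-test bound gives uniform convergence in $x$, hence continuity of $f(\cdot,u,v)$. For condition (3) I would argue termwise: since $\varphi\in\mathcal P(G)$, the kernel $(x,y)\mapsto\varphi(x^{-1}y)$ is positive definite on $G$, and $B(\varphi)$ is positive definite on $X$ by hypothesis, so their product $((x,u),(y,v))\mapsto B(\varphi)(u,v)\varphi(x^{-1}y)$ is positive definite on $G\times X$ by the Schur product theorem. The kernel \eqref{eq:kern} is the uniformly convergent sum of these, and a pointwise limit of positive definite kernels is positive definite, so $f\in\mathcal P_K^\sharp(G,X^2)$. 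The main obstacle is concentrated entirely in the single slicing step of the forward direction; everything else is a direct transcription of the arguments already established for Theorem~\ref{thm:main} and of standard facts about positive definite kernels.
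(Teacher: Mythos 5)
Your proof is correct, and its backbone---the slicing functions $F(x)=\sum_{j,k}c_j\overline{c_k}\,f(x,u_j,u_k)$ fed into Theorem~\ref{thm:compBG}(ii)---is exactly the paper's. Where you genuinely diverge is in how the pointwise expansion \eqref{eq:expandgen} is extracted. The paper (following \cite{G:M}, of whose Theorem 2.3 its proof is a ``simple modification'') uses polarization: it specializes to $n=2$ and $(c_1,c_2)=(1,1),(1,-1),(1,i)$, obtains uniformly convergent expansions with nonnegative coefficients for the resulting functions $F_1,F_2,F_3\in\mathcal P_K^\sharp(G)$, and recovers $f(x,u,v)$ as a linear combination of these, getting an absolutely convergent series directly. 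You instead expand $f(\cdot,u,v)$ in $L^2_K(G)^\sharp$ via Theorem~\ref{thm:compBG}(i) and upgrade $L^2$-convergence to uniform-in-$x$ convergence using the Cauchy--Schwarz bound $|B(\varphi)(u,v)|\le\tfrac12\bigl(B(\varphi)(u,u)+B(\varphi)(v,v)\bigr)$ and the Weierstrass M-test, then identify the uniform limit with $f(\cdot,u,v)$ by continuity; this transplants the concluding argument of the proof of Theorem~\ref{thm:main} rather than the polarization step. Both routes are valid, and yours has the merit of being self-contained: your weight-grouping computation with the coefficients $a_pc_j$ proves the fact $F\in\mathcal P_K^\sharp(G)$ that the paper quotes as Lemma 2.2 of \cite{G:M}, your derivation of $B(\varphi)\in\mathcal P(X^2)$ from linearity of the Fourier coefficients makes explicit what the paper covers by ``as in \cite{G:M}'', and your Schur-product-plus-pointwise-limit argument for (ii)$\implies$(i) fills in what the paper leaves as ``easy to see''. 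The price is a (harmless) reliance on the $L^2$ theory and the full-support property of Haar measure at the identification step, which the polarization route avoids.
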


\begin{proof} The proof is a simple modification of the proof of Theorem 2.3 in \cite{G:M}, so we shall be very brief.

"(i)$\implies $(ii)."

Given $f\in \mathcal P^\sharp_K(G,X^2)$ and $u_1,\ldots,u_n\in X$, $c_1,\ldots,c_n\in\C$ we define 
\begin{equation}\label{eq:comp} 
F(x)=\sum_{j,k=1}^n f(x,u_j,u_k)c_j\overline{c_k}
\end{equation}
and observe that $F\in\mathcal P^\sharp_K(G)$, cf. Lemma 2.2 in \cite{G:M}. By Theorem~\ref{thm:compBG} (ii) we have the series representation
 \begin{equation}\label{eq:compsum} 
F(x)=\sum_{\varphi\in Z} B(\varphi,F)\varphi(x),\quad x\in G,
\end{equation}
where
$$
B(\varphi,F)=\delta(\varphi)\int_G F(x)\overline{\varphi(x)}\,d\omega_G(x) \ge 0
$$
satisfy $\sum_{\varphi \in Z}B(\varphi,F)<\infty$. In particular for $n=2,u_1=u,u_2=v\in X$ and 
$(c_1,c_2)=(1,1),(1,-1), (1,i)$ we get that
\begin{eqnarray*}
F_1(x)&=&f(x,u,u)+f(x,v,v)+f(x,u,v)+f(x,v,u)\\
F_2(x)&=&f(x,u,u)+f(x,v,v)-f(x,u,v)-f(x,v,u)\\
F_3(x)&=&f(x,u,u)+f(x,v,v)-i f(x,u,v)+i f(x,v,u)
\end{eqnarray*}
have convergent expansions like \eqref{eq:compsum}. Since  $f(x,u,v)$ is a linear combination of $F_1,F_2,F_3$, there exists a uniquely determined kernel $B(\varphi):X^2\to \C$ such that
$$
f(x,u,v)=\sum_{\varphi\in Z} B(\varphi)(u,v)\varphi(x),\quad x\in G, u,v\in X,
$$
and the series is absolutely convergent. As in \cite{G:M} we see that $B(\varphi)\in\mathcal P(X^2)$.

"(ii)$\implies $(i)."

If $B(\varphi)\in\mathcal P(X^2)$ satisfies $\sum_{\varphi\in Z}B(\varphi)(u,u)<\infty$ for each $u\in X$, then $\sum_{\varphi\in Z}|B(\varphi)(u,v)|<\infty$ for all $u,v\in X$ because
$$
|B(\varphi)(u,v)|\le (B(\varphi)(u,u)B(\varphi)(v,v))^{1/2}\le (1/2)(B(\varphi)(u,u)+B(\varphi)(v,v)).
$$
It is now easy to see that $f(x,u,v)$ defined by \eqref{eq:expandgen} satisfies (i).
\end{proof}

Theorem 2.7 from \cite{G:M} can also be generalized to the present framework:

\begin{rem}{\rm Suppose in the setting of Theorem~\ref{thm:GM} that  there exists a function $b:Z\to [0,\infty[$ with $\sum_{\varphi\in Z} b(\varphi)<\infty$ such that 
$$
B(\varphi)(u,u)\le b(\varphi),\quad u\in X,
$$
then the expansion \eqref{eq:expandgen} is uniformly convergent for $x\in G,u,v\in X$.
}
\end{rem}

\noindent
Christian Berg\\
Department of Mathematical Sciences, University of Copenhagen\\
Universitetsparken 5, DK-2100, Denmark\\
e-mail: {\tt{berg@math.ku.dk}}

\vspace{0.4cm}
\noindent
Ana P. Peron\\
Departamento de Matem{\'a}tica, ICMC-USP-S{\~a}o  Carlos\\
Caixa Postal 668, 13560-970 S{\~a}o Carlos SP, Brazil\\
e-mail: {\tt{apperon@icmc.usp.br}} 

\vspace{0.4cm}
\noindent
Emilio Porcu \\
Department of Mathematics, Universidad T{\'e}cnica Federico Santa Maria\\
Avenida Espa{\~n}a 1680, Valpara{\'\i}so, 2390123, Chile\\
e-mail: {\tt{emilio.porcu@usm.cl}}

\end{document}